\theoremstyle{plain}
\newtheorem{theorem}{Theorem}[section]
\newtheorem{lemma}[theorem]{Lemma}
\newtheorem{proposition}[theorem]{Proposition}
\newtheorem{corollary}[theorem]{Corollary}
\theoremstyle{remark}
\newtheorem{remark}{Remark}[section]
\begin{document}
\makeatletter
\def\@settitle{\begin{center}%
  \baselineskip14\p@\relax
    \normalfont\LARGE
  \@title
  \end{center}%
}
\makeatother
\setcounter{section}{0}

\thispagestyle{empty}

\newcommand{\QQ}{\mathbb{Q}}
\newcommand{\R}{\mathbb{R}}
\newcommand{\Z}{\mathbb{Z}}
\newcommand{\N}{\mathbb{N}}
\newcommand{\RR}{\mathbb{R}}
\newcommand{\ZZ}{\mathbb{Z}}
\newcommand{\NN}{\mathbb{N}}
\newcommand{\Nor}{\mathscr{N}}
\newcommand{\CC}{\mathbb{C}}
\newcommand{\HH}{\mathbb{H}}
\newcommand{\EE}{\mathbf{E}}
\newcommand{\Var}{\operatorname{Var}}
\newcommand{\PP}{\mathbf{P}}
\newcommand{\Rd}{\mathbb{R}^d}
\newcommand{\Rn}{\mathbb{R}^n}
\newcommand{\XX}{\mathcal{X}}
\newcommand{\YY}{\mathcal{Y}}
\newcommand{\MM}{\FF}
\newcommand{\BB}{\mathscr{B}}
\newcommand{\system}{(\Omega,\mathcal{F},\mu,T)}
\newcommand{\FF}{\mathcal{F}}
\newcommand{\MBS}{(\Omega,\mathcal{F})}
\newcommand{\MBSE}{(E,\mathscr{E})}
\newcommand{\MS}{(\Omega,\mathcal{F},\mu)}
\newcommand{\PS}{(\Omega,\mathcal{F},\mathbb{P})}
\newcommand{\LDP}{LDP(\mu_n, r_n, I)}
\newcommand{\Def}{\overset{\text{def}}{=}}
\newcommand{\Series}[2]{#1_1,\cdots,#1_#2}
\newcommand{\independent}{\perp\mkern-9.5mu\perp}
\def\avint{\mathop{\,\rlap{-}\!\!\int\!\!\llap{-}}\nolimits}

 \let\MakeUppercase\relax 

\author[Aditya Guha Roy]{\large Aditya Guha Roy}
\address[Aditya Guha Roy]{}
\email{guharoyagraditya@gmail.com}

\author[Yuval Peres]{\large Yuval Peres}
\address[Yuval Peres]{Beijing Institute of Mathematical Sciences and Applications}
\email{yperes@gmail.com}
  
\author[Shuo Qin]{\large Shuo Qin}
\address[Shuo Qin]{Beijing Institute of Mathematical Sciences and Applications, and Yau Mathematical Sciences Center, Tsinghua University}
\email{qinshuo@bimsa.cn}

\author[Junchi Zuo]{\large Junchi Zuo}
\address[Junchi Zuo]{Qiuzhen College, Tsinghua University}
\email{zuojc21@mails.tsinghua.edu.cn}

\title{\bf How reactive gambling can backfire: ruin probability  is increasing in $p$, H\"older continuous in initial fortune}   
\date{}

  \begin{abstract}
A gambler with an initial fortune $x$ starts by betting a dollar, then doubles the bet after every win and halves the bet after every loss. Let $p\in (0,1)$ be the probability of winning for each round. We show that the gambler survives with positive probability if and only if $p < 1/2$ and $x > 2$. Moreover, the ruin probability is increasing and real-analytic in $p$, but a singular, H\"older continuous function of $x$.
  \end{abstract}

  \maketitle

\section{Introduction}
\label{secintro}

  As mentioned in \cite[Chapter 1]{wagenaar2016paradoxes}, about half of the players at blackjack tables increase their bets after winning, and decrease them after losing.  This motivates us to study the following betting strategy: a gambler doubles the bet after every win and halves the bet after every loss. More precisely, let $B_1:=1$ and $W_0:=x\geq 1$, and let $(\xi_n)_{n\geq 1}$ be i.i.d. Rademacher random variables with parameter $p\in [0,1]$, i.e.,
$$
\PP(\xi_1=1)=1-\PP(\xi_1=-1)=p,
$$
where $\xi_n=1$ (resp. $-1$) represents the gambler wins (resp. loses) the $n$-th round. For $n\geq 1$, let 
\begin{equation}
    \label{defWBnplus1}
  W_{n}:=W_{n-1}+\xi_n B_n,\ \text{and}\ B_{n+1}:=B_n2^{\xi_n}. 
\end{equation}
The random variables $W_n$ and $B_n$ should be interpreted, respectively, as the wealth of the gambler at time $n$ and the amount of money the gambler bets at time step $n$. The rule $B_{n+1} = B_n 2^{\xi_n}$ means that the bet size is doubled after every win and halved after each loss. We will be interested in how the ruin probability 
$$
f(x,p):=\PP(W_n\leq 0\ \text{for some } n|W_0=x)
$$
depends on the initial fortune $x$ and the chance $p$ of winning each round. In the cases of $p=0$ and $p=1$, the process is deterministic, which is of less interest: One has $f(x,0)=1$ if $x< 2$, and $f(x,0)=0$ if $x\geq 2$. Also, $f(x,1)=0$ for all $x\geq 1$. We shall henceforth assume that $p\in (0,1)$.

Our first main result answers the question under what conditions the gambler can survive with positive probability (i.e., $f(x,p)<1$). 

\begin{theorem}
\label{f1conditions}
  The ruin probability $f(x,p)<1$ if and only if $x>2$ and $p<1/2$.
\end{theorem}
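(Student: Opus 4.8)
The plan is to reduce everything to a one--dimensional random dynamical system. Put $G_n := W_n/B_{n+1}$. Since $B_{n+1}=B_n2^{\xi_n}$ and $W_n=W_{n-1}+\xi_nB_n$, dividing the recursion by $B_{n+1}$ gives $G_n=(1+G_{n-1})/2$ when $\xi_n=1$ and $G_n=2(G_{n-1}-1)$ when $\xi_n=-1$, with $G_0=x$; and since $B_{n+1}>0$ the ruin event is exactly $\{G_n\le 0\text{ for some }n\}$. It is cleaner to work with $H_n:=2-G_n$: then $H_0=2-x$, a win acts by $h\mapsto(h+1)/2$ (a contraction toward the fixed point $1$), a loss acts by $h\mapsto 2h$ (a dilation away from the fixed point $0$), and ruin is the event $\{H_n\ge 2\text{ for some }n\}$. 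The whole argument is then an analysis of this chain.

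\emph{If $x\le 2$ or $p\ge 1/2$, then $f(x,p)=1$.} I would first prove an absorption lemma: \emph{if $H_n\in[1/2,2)$ for some $n$, then ruin occurs a.s.} Indeed $[1/2,2)$ is forward--invariant until ruin (a win keeps $h$ in $[3/4,3/2)$; a loss sends $h$ into $[1,4)$, which lies in $[1/2,2)$ unless it is $\ge 2$, i.e.\ unless ruin just happened), and from any $h\in[1/2,2)$ two consecutive losses force $h\ge 2$; over disjoint pairs of time steps these are independent events of probability $(1-p)^2>0$, so Borel--Cantelli (together with the strong Markov property at the first visit to $[1/2,2)$) gives ruin a.s. It then remains to show the chain a.s.\ reaches $[1/2,2)$ or is ruined first. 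If $x\le 2$ this is immediate: $H_0=2-x\ge 0$, the half--line $[0,\infty)$ is forward--invariant, so until ruin the chain stays in $[0,2)$, and the first win — which occurs at an a.s.\ finite time since $p>0$ — puts $H$ into $[1/2,2)$ because $(h+1)/2\in[1/2,3/2)$ when $h\in[0,2)$. If $p\ge 1/2$ I would instead show that from any start the chain a.s.\ enters $[0,\infty)$, after which the previous sentence applies. Assuming $H_0<0$, the function $\psi(h):=\log(-h)$ satisfies $\EE[\psi(H_{n+1})\mid\FF_n]\le\psi(H_n)$ whenever $H_n\le-3$ — this reduces to the inequality $p\log(1-1/a)\le(2p-1)\log 2$ with $a=-H_n\ge 3$, which holds since the left side is negative and the right side is $\ge 0$ — so with $\sigma:=\inf\{n:H_n>-3\}$ the process $\psi(H_{n\wedge\sigma})$ is a supermartingale bounded below (it is $\ge\log 3$ before $\sigma$ and $H_\sigma\in(-3,-1]$), hence convergent a.s.; but on $\{\sigma=\infty\}$ convergence of $\psi(H_n)$ would force $H_n\to-c$ for some $c\ge 3$, impossible because infinitely many wins occur and $(h+1)/2=h$ has no solution with $h\le-3$. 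Thus $\sigma<\infty$ a.s., so $H$ enters $(-3,0)$, from which two consecutive wins reach $[0,\infty)$; a geometric--trials argument (returns to $(-3,0)$ are a.s.\ finite by reapplying the same supermartingale bound via the strong Markov property) upgrades this to: $[0,\infty)$ is reached a.s. In all cases $H$ reaches $[1/2,2)$ a.s.\ or is ruined first, so $f(x,p)=1$.

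\emph{If $x>2$ and $p<1/2$, then $f(x,p)<1$.} Here $H_0=2-x<0$ and it suffices to exhibit a positive--probability event on which $H_n<2$ for all $n$. Since $p<1/2$, the map $\alpha\mapsto p\,2^\alpha+(1-p)2^{-\alpha}$ has value $1$ and negative derivative $(2p-1)\log 2$ at $\alpha=0$, so I can fix $\alpha>0$ and then $B\ge 3$ with $p\,2^\alpha(1-1/B)^{-\alpha}+(1-p)2^{-\alpha}=:\theta<1$. With $g(h):=(-h)^{-\alpha}$ and $\tau:=\inf\{n:H_n>-B\}$, the same computation (now using $-H_n\ge B$ for $n<\tau$) shows $\EE[g(H_{n+1})\mid\FF_n]\le\theta\,g(H_n)$ for $n<\tau$, so $g(H_{n\wedge\tau})$ is a nonnegative supermartingale; moreover $H_\tau\in(-B,-1]$ on $\{\tau<\infty\}$, so $g(H_\tau)\ge B^{-\alpha}$ there, and Fatou gives $\PP(\tau<\infty\mid H_0)\le\bigl(B/(-H_0)\bigr)^{\alpha}$. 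Finally, on the event that the first $j$ rounds are all losses (probability $(1-p)^j>0$) one has $H_j=2^j(2-x)$; choosing $j$ with $2^j(x-2)>B$ and applying the bound to the chain restarted at time $j$, the conditional probability of never thereafter exceeding $-B$ — hence, since then $H_n<2$ for all $n$, of never being ruined — is at least $1-\bigl(B/(2^j(x-2))\bigr)^{\alpha}>0$. Therefore $f(x,p)\le 1-(1-p)^j\bigl(1-(B/(2^j(x-2)))^{\alpha}\bigr)<1$.

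\emph{Main obstacle.} The decisive step is spotting the substitution $G_n=W_n/B_{n+1}$; once the dynamics appear as iterated affine maps with an attracting fixed point (wins) and a repelling one (losses), the direction $x>2,\ p<1/2$ and the case $x\le 2$ are routine supermartingale/Borel--Cantelli arguments. The genuinely delicate point is the regime $p\ge 1/2$, and especially $p=1/2$ exactly: no power function $(-h)^{\gamma}$ is a supermartingale on the negative half--line there, so one is forced to use $\log(-h)$ and then rule out escape to $-\infty$ through the elementary but essential ``no fixed point'' observation, before stitching things together with a geometric--trials argument that brings the chain back to $[1/2,2)$.
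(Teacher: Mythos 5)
Your proof is correct, but it takes a genuinely different route from the paper's for both nontrivial directions. Your chain $H_n=2-W_n/B_{n+1}$ is exactly the negative of the paper's $X_n=W_n/B_{n+1}-2$, and your absorption lemma is essentially the paper's Lemma \ref{Ynleq2lem}; from there, however, the paper immediately solves the recursion for $X_n$ in closed form and obtains the series representation $f(x,p)=\PP(S>x-2)$ with $S=\sum_{n\geq 1}\mathds{1}_{\{\xi_n=1\}}2^{S_{n-1}}$ (Lemma \ref{ruinprobexpresslem}), after which the case $p\geq 1/2$ is dispatched by conditional Borel--Cantelli (the walk satisfies $S_n\geq 0$ infinitely often, forcing $S=\infty$) and the case $x>2,\ p<1/2$ by the strong law of large numbers plus a coupling/monotonicity step. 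You instead stay with the Markov chain $H_n$ and run drift arguments: for $p\geq 1/2$ a $\log(-h)$ supermartingale, a no-fixed-point observation to rule out escape to $-\infty$, and a geometric-trials loop to reach $[0,\infty)$; for $p<1/2$ a $(-h)^{-\alpha}$ supermartingale giving $\PP(\tau<\infty)\leq (B/(-H_0))^{\alpha}$ after forcing $j$ initial losses. I checked the key computations (the drift inequalities, $H_\sigma\in(-3,-1]$ and $H_\tau\in(-B,-1]$ since level crossings can only occur via a win, and the Fatou/optional-stopping step), and they are sound; the geometric-trials step is only sketched, but it is a routine strong-Markov iteration with success probability at least $p^2$ per visit to $(-3,0)$. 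As for what each approach buys: your Lyapunov route is self-contained, quantitative (it yields explicit survival lower bounds), and handles $p=1/2$ without invoking random-walk recurrence; the paper's route produces the representation $f(x,p)=\PP(S>x-2)$, which is not merely a device for Theorem \ref{f1conditions} but the backbone of all subsequent results (monotonicity in $p$, H\"older continuity, singular continuity), so within the paper it is the more economical choice.
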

\begin{remark}
 From the proof of Theorem \ref{f1conditions}, one can show that if we assumed in (\ref{defWBnplus1}) that $B_{n+1}:=B_n\rho^{\xi_n}$ for some  $\rho>1$, then the threshold for $x$ would be $\rho/(\rho-1)$.
\end{remark}

To approximate the value of $f(x,p)$ on $(2,\infty) \times (0,1/2)$ numerically,
we can use the following sequence of functions $(f_n(x,p))_{n\in \NN}$ defined on $\R \times (0,1)$: Set $f_0(x,p):=\mathds{1}_{(-\infty,2]}(x)$, and let 
\begin{equation}
 \label{recursiveequ}
 f_{n+1}(x,p)=pf_{n}\left(\frac{x+1}{2},p\right)+(1-p)f_{n}(2x-2,p), \quad n\in \NN.
\end{equation}

\begin{proposition}
\label{fapprofn}
For $x> 2$, the sequence $(f_n(x,\cdot))_{n\in \NN}$ converges to $f(x,\cdot)$ locally uniformly on $(0,1/2)$. For $p\in (0,1/2)$, the sequence $(f_n(\cdot,p))_{n\in \NN}$ converges to $f(\cdot,p)$ locally uniformly on $(2,\infty)$. In particular, $f(x,p)$ satisfies
\begin{equation}
    \label{fequself}
   f(x,p)=pf\left(\frac{x+1}{2},p\right)+(1-p)f(2x-2,p), \quad (x,p) \in (2,\infty) \times \left(0,\frac{1}{2}\right). 
\end{equation}
\end{proposition}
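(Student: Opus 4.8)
\emph{Proof sketch.} The plan is to recast the problem via the auxiliary Markov chain $(Z_n)_{n\ge 0}$ on $\R$ with $Z_0=x$ and, at each step independently, $Z_{n+1}=\frac{Z_n+1}{2}$ with probability $p$ and $Z_{n+1}=2Z_n-2$ with probability $1-p$. Since $B_{n+1}=2^{\xi_1+\cdots+\xi_n}>0$, one checks directly that $Z_n:=W_n/B_{n+1}$ is exactly this chain and that ruin, $\{W_n\le 0\ \text{for some }n\}$, equals $\{Z_n\le 0\ \text{for some }n\}$, so $f(x,p)=\PP_x(\tau<\infty)$ with $\tau:=\inf\{n:Z_n\le 0\}$. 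Both affine maps $z\mapsto\frac{z+1}{2}$ and $z\mapsto 2z-2$ send $(-\infty,2]$ into itself, so that set is absorbing, and unwinding (\ref{recursiveequ}) gives $f_n(x,p)=\EE_x[f_0(Z_n)]=\PP_x(Z_n\le 2)=\PP_x(\sigma\le n)$, where $\sigma:=\inf\{n:Z_n\le 2\}$. Hence $f_n(x,p)\uparrow\PP_x(\sigma<\infty)$; by Theorem \ref{f1conditions} the chain ruins almost surely from every state $\le 2$, so the strong Markov property at $\sigma$ yields $\PP_x(\sigma<\infty)=\PP_x(\tau<\infty)=f(x,p)$. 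This is the pointwise convergence; letting $n\to\infty$ in (\ref{recursiveequ}) gives (\ref{fequself}), which in fact holds for all $x$ when $p<1/2$, since for $x\le 2$ both sides equal $1$.

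To upgrade to locally uniform convergence I would invoke monotonicity together with continuity of the limit. Coupling two initial fortunes along the same $(\xi_n)$ shows $f_n(\cdot,p)$ and $f(\cdot,p)$ are non-increasing in $x$, and an induction on $n$ in (\ref{recursiveequ}), using this and $\frac{x+1}{2}<2x-2$ for $x>2$, shows $f_n(x,\cdot)$ and hence $f(x,\cdot)$ are non-decreasing in $p$. Granting that $f(\cdot,p)$ is continuous on $(2,\infty)$ and $f(x,\cdot)$ on $(0,1/2)$, I would finish with the elementary observation that if $g_n\uparrow g$ pointwise on an interval, all $g_n$ and $g$ are monotone in the same sense, and $g$ is continuous, then $g_n\to g$ uniformly on compact subintervals: pick a finite partition on which $g$ varies by at most $\varepsilon$ and bound $g-g_n$ at an arbitrary point by its values at the neighbouring partition points, using monotonicity of $g_n$ and $g$. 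Applying this in $x$ for fixed $p$, and in $p$ for fixed $x$, gives both statements.

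The remaining point — continuity of $f$ — I would deduce from the fact that on the survival event $\{\sigma=\infty\}$ one has $Z_n\to\infty$ almost surely. For the latter, fix $M$: from any state $z\in(2,M]$ the chain reaches $(-\infty,0]$ within $k_M+2$ steps with probability at least $p^{k_M}(1-p)^2>0$, by winning $k_M:=\lceil\log_2(2M)\rceil$ times — which moves $Z$ into $(1,\tfrac32)$ — and then losing twice; a conditional Borel--Cantelli argument then shows that on $\{\sigma=\infty\}$ the chain visits $(2,M]$ only finitely often, and since $M$ is arbitrary and $Z_n>2$ throughout on that event, $Z_n\to\infty$ there. Now fix $p<1/2$. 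By (\ref{fequself}), the bounded process $f(Z_n,p)$ is a martingale; it converges to $1$ on $\{\sigma<\infty\}$, so equating expectations with $f(x,p)=\PP_x(\sigma<\infty)$ forces its limit to vanish on $\{\sigma=\infty\}$; since $\PP_x(\sigma=\infty)>0$ and $f(\cdot,p)$ is non-increasing, this gives $\lim_{y\to\infty}f(y,p)=0$, whence the jump function $\delta(y):=f(y^-,p)-f(y^+,p)\ge 0$ tends to $0$. Differencing (\ref{fequself}) shows $\delta=\mathcal{L}\delta$ for the transition operator $\mathcal{L}h(x):=p\,h(\tfrac{x+1}{2})+(1-p)\,h(2x-2)$; moreover $\delta$ is summable (monotonicity of $f$) and vanishes on $(-\infty,2]$. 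Hence $\delta(x)=\EE_x[\delta(Z_n)]$ for all $n$, and $\delta(Z_n)\to 0$ almost surely (eventually $0$ on $\{\sigma<\infty\}$; tending to $0$ on $\{\sigma=\infty\}$ because $Z_n\to\infty$), so bounded convergence gives $\delta\equiv 0$ on $(2,\infty)$. The same argument with the transition operator of parameter $q$ and the jump function $q\mapsto f(x,q^+)-f(x,q^-)$ — again a nonnegative summable fixed point vanishing for $x\le 2$ and tending to $0$ as $x\to\infty$ — gives continuity of $f(x,\cdot)$ on $(0,1/2)$.

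I expect the main obstacle to be this last step: controlling the conditioned event $\{\sigma=\infty\}$. Everything hinges on the escape-to-infinity statement $Z_n\to\infty$ on survival, for which the explicit ``block of wins, then two losses'' route to ruin from any bounded state, combined with conditional Borel--Cantelli, is the key device; the martingale computation then turns this into the vanishing of the jump function, hence into the continuity of $f$ that the Dini-type argument requires. The remaining ingredients — identifying $(Z_n)$, the monotonicities, and passing to the limit in the recursion — are routine.
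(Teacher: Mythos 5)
Your argument is correct in substance, but it reaches the two locally–uniform statements by a genuinely different route than the paper. The pointwise part is essentially the paper's: your identification $f_n(x,p)=\PP_x(\sigma\le n)$ for the chain $Z_n=W_n/B_{n+1}$ (the paper's $Y_n$), together with almost-sure ruin from states $\le 2$ (the paper's Lemma \ref{Ynleq2lem}), is the same content as the paper's identification of $f_n$ with $\widetilde f_n$ and its use of Lemma \ref{ruinprobexpresslem}, and passing to the limit in (\ref{recursiveequ}) gives (\ref{fequself}) in both treatments. Where you diverge is in how uniformity is obtained. The paper simply leans on machinery it has already built: continuity of $f(\cdot,p)$ comes from the H\"older bound of Theorem \ref{holderthm} (then the same P\'olya--Dini step you use gives uniformity in $x$), and uniformity in $p$ is read off from the proof of Theorem \ref{derivativex0}(ii), where Lemmas \ref{Tklarger34lem} and \ref{Tkles34lem} give a geometric bound on $\PP(X_k>0,X_{k+1}\le 0)$ uniformly on compact subsets of $(0,1/2)$, hence geometric-rate uniform convergence (and ultimately analyticity). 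Your route is softer and self-contained: you add a monotonicity-in-$p$ observation for $f_n$ (by induction on (\ref{recursiveequ}), using monotonicity in $x$ and $\frac{x+1}{2}\le 2x-2$ for $x\ge 2$, trivial for $x\le 2$ where $f_n\equiv 1$), prove bare continuity of $f$ in each variable via the jump-function fixed-point argument $\delta=\mathcal{L}\delta$, the bounded martingale $f(Z_n,p)$, and the dichotomy ``absorption in $(-\infty,2]$ or $Z_n\to\infty$'' (established by your win-block/two-losses route plus conditional Borel--Cantelli, the same device the paper uses in Theorem \ref{f1conditions}), and then conclude by P\'olya--Dini in each variable separately. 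This buys independence from Sections 3--4 of the paper at the cost of yielding no rate of convergence and reproving (a weaker form of) continuity that the paper already has. A few points you gloss over are routine but should be filled in: $\delta$ must be checked to vanish at the single point $x=2$ (or note that on the ruin event the chain eventually lies strictly below $2$); the identity $\delta_p=\mathcal{L}_p\delta_p$ for the $p$-jumps uses that the one-sided limits in $q$ pass through (\ref{fequself}) by monotonicity; the decay $\delta_p(x)\to 0$ as $x\to\infty$ follows from $\delta_p(x)\le f(x,q_0)$ for any fixed $q_0\in(p,1/2)$; and measurability of the jump functions (needed for $\delta(x)=\EE_x[\delta(Z_n)]$) holds since they are limits of monotone functions. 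With these details supplied, your proof is complete.
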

See Figure \ref{fnxp} for an illustration of $(f_n(x,p))_{n\in \NN}$. Notice that (\ref{recursiveequ}) implies that for any $n$, the function $x \mapsto f_n(x,p)$ is a step function for any $p,$ while $p \mapsto f_n(x,p)$ is a polynomial for any $x.$

 \begin{figure}[t]
    \centering
    \subfigure[$f_{18}(x,p)$ as a function of $x$]{
    \includegraphics[width=5.5cm]{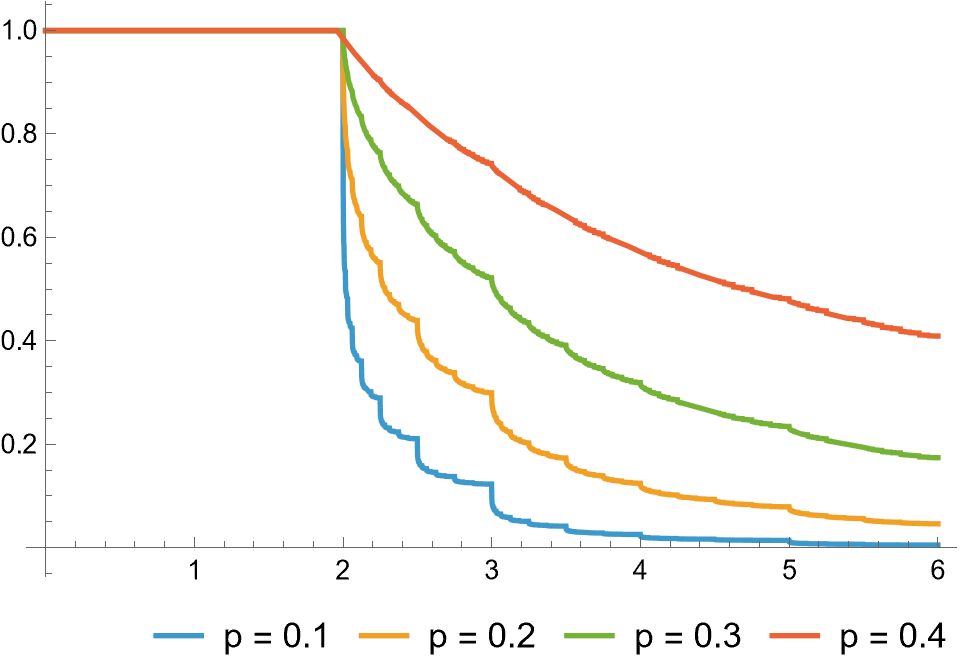}
   \label{fx}
    }
    \quad
    \subfigure[$f_n(3,p)$ as a function of $p$]{
    \includegraphics[width=5.5cm]{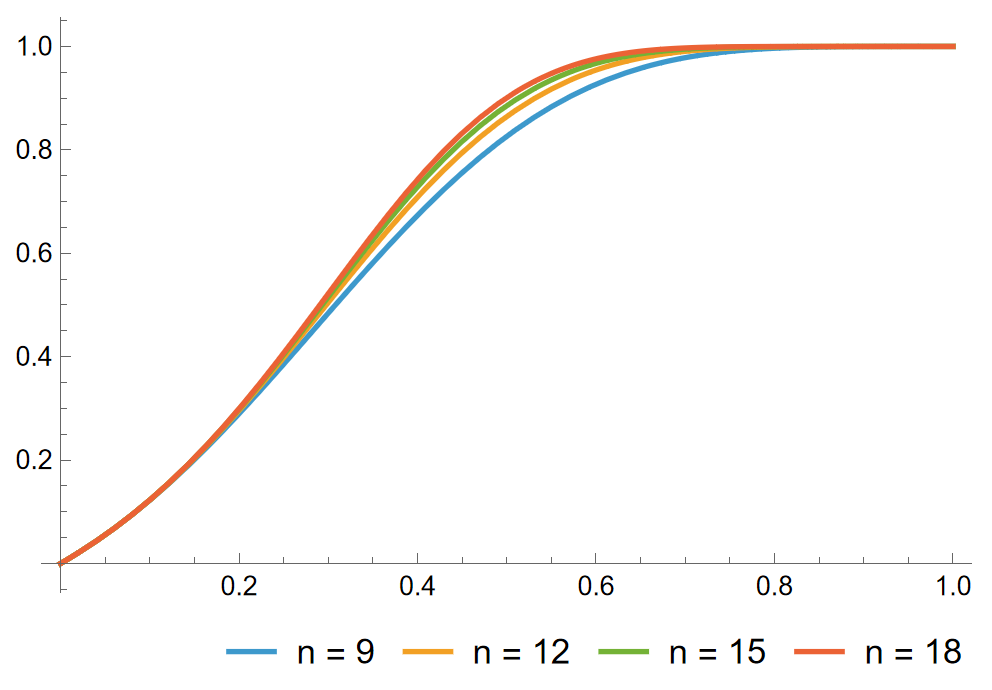}
   \label{fp}
    }
    \caption{$f_n(x,p)$ is a step function in $x$ and an analytic function in $p$} \label{fnxp}
\end{figure}

It is easy to see that $f(\cdot,p)$ is a decreasing function in $x$. Surprisingly, the ruin probability is an increasing function in $p$, the chance of winning each round. 

\begin{corollary}
\label{fincp}
For fixed $x>2$, $f(x,\cdot)$ is a strictly increasing function on $(0,1/2)$.
\end{corollary}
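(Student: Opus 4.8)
The plan is to fix $0<p_1<p_2<1/2$ and prove $f(x,p_1)<f(x,p_2)$ for every $x>2$, working throughout with the self-similarity relation (\ref{fequself}) and its finite approximations. Set $g(x):=f(x,p_2)-f(x,p_1)$. Substituting (\ref{fequself}) for $p=p_1$ and $p=p_2$ at the same point $x>2$ and regrouping the four resulting terms yields
\begin{equation*}
g(x)=p_2\,g\!\left(\tfrac{x+1}{2}\right)+(1-p_2)\,g(2x-2)+(p_2-p_1)\Big(f\!\left(\tfrac{x+1}{2},p_1\right)-f(2x-2,p_1)\Big),\qquad x>2,
\end{equation*}
where for an argument $\le 2$ we use $f(\cdot,p_i)\equiv 1$ (Theorem \ref{f1conditions}), so $g$ vanishes there. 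The crucial observation is that the last, ``error'' term is $\ge 0$: $p_2>p_1$; a win sends $x$ to $\tfrac{x+1}{2}$, which is below $2x-2$, the value after a loss, whenever $x>5/3$; and $f(\cdot,p_1)$ is nonincreasing in $x$. (Heuristically, raising $p$ makes wins more frequent, and wins drag the ``wealth-to-bet ratio'' $x$ down toward the absorbing region $\{x\le 2\}$ while losses push it up --- this is the mechanism behind the counterintuitive monotonicity.)

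Before reading off strictness I would first confirm $g\ge 0$, i.e. the (already known) non-strict monotonicity, by induction along the approximants of Proposition \ref{fapprofn}. A short induction from (\ref{recursiveequ}) shows each $f_n(\cdot,p)$ is nonincreasing and $\equiv 1$ on $(-\infty,2]$; then, with $g_n:=f_n(\cdot,p_2)-f_n(\cdot,p_1)$, the very same regrouping produces the identity above with $f_n,g_n$ in place of $f,g$. Since $g_0\equiv 0$, the error term is $\ge 0$ by monotonicity of $f_n(\cdot,p_1)$, and $g_n\equiv 0$ on $(-\infty,2]$, induction gives $g_n\ge 0$ on $\R$ for all $n$, and Proposition \ref{fapprofn} passes this to $g\ge 0$ on $(2,\infty)$ in the limit.

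For strictness I would use the identity twice. If $x\in(2,3]$ then $\tfrac{x+1}{2}\in(\tfrac32,2]$, so $f(\tfrac{x+1}{2},p_1)=1$, whereas $2x-2\in(2,4]$ gives $f(2x-2,p_1)<1$ by Theorem \ref{f1conditions}; dropping the first two (nonnegative) terms yields $g(x)\ge(p_2-p_1)\big(1-f(2x-2,p_1)\big)>0$. For arbitrary $x>2$, writing $h(y):=\tfrac{y+1}{2}$, the identity gives $g(x)\ge p_2\,g(h(x))$, hence $g(x)\ge p_2^{\,k}\,g(h^{k}(x))$ as long as $h^{j}(x)>2$ for all $j<k$. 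The map $h$ is a strict contraction toward its fixed point $1$ and sends $(3,\infty)$ into $(2,\infty)$, so the orbit $x,h(x),h^2(x),\dots$ decreases through $(2,\infty)$ and first lands in $(2,3]$ after some finite number $k$ of steps; then $g(h^k(x))>0$ by the first case, so $g(x)\ge p_2^{\,k}\,g(h^k(x))>0$, completing the proof.

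The step I expect to be the real obstacle is establishing $g\ge 0$: the functional equation (\ref{fequself}) alone does not determine $f$, so one genuinely has to route the non-strict monotonicity through the approximating sequence $f_n$, and one must perform the four-term regrouping correctly so that the positive contribution $(p_2-p_1)\big(f(\tfrac{x+1}{2},p_1)-f(2x-2,p_1)\big)$ is cleanly isolated. Everything after that --- the two applications of the identity and the finite iteration of $h$ --- is routine.
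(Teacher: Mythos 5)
Your argument is correct, but it takes a genuinely different route from the paper. The paper proves Corollary \ref{fincp} by a monotone coupling: it builds the two Rademacher sequences from common uniforms so that $\xi^{(1)}_n\leq\xi^{(2)}_n$ pathwise, deduces $S^{(1)}\leq S^{(2)}$ for the series of Lemma \ref{ruinprobexpresslem}, and gets strictness by forcing finitely many wins in the $p_2$-system on an event independent of the $p_1$-system, so that $\PP(S^{(2)}>x-2\geq S^{(1)})\geq c\,(1-f(x,p_1))>0$. You instead work entirely through the recursion: the four-term regrouping of (\ref{fequself}) isolating $(p_2-p_1)\bigl(f(\tfrac{x+1}{2},p_1)-f(2x-2,p_1)\bigr)$ is algebraically correct, the sign of that term does follow from $\tfrac{x+1}{2}\leq 2x-2$ for $x>2$ together with monotonicity in $x$, your induction along $(f_n)$ from (\ref{recursiveequ}) legitimately supplies the weak inequality $g\geq 0$ (pointwise convergence in Proposition \ref{fapprofn} suffices, and there is no circularity since that proposition does not use the corollary), and the iteration $g(x)\geq p_2^k g(h^k(x))$ with $h^k(x)=1+(x-1)2^{-k}$ landing in $(2,3]$ gives strictness via Theorem \ref{f1conditions}. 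The only points needing care, which you do address, are extending (\ref{fequself}) to arguments $\leq 2$ via the convention $f\equiv 1$ there (justified by Theorem \ref{f1conditions} and consistent with $\widetilde f_n$ in the paper), and noting that for $x\leq 2$ one uses $g_n\equiv 0$ rather than the sign of the error term. Comparing the two: the paper's coupling is shorter, more probabilistic, and explains the monotonicity mechanism directly at the level of sample paths, while your recursive proof is self-contained modulo Proposition \ref{fapprofn}, avoids the explicit representation of $S$, and exposes the same mechanism analytically through the inequality $\tfrac{x+1}{2}<2x-2$ (a win pushes the scaled fortune toward the absorbing region more than a loss pushes it away).
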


Gambling theory offers natural examples of functions with singular differentiability properties, as shown by   Dubins and Savage \cite{MR0236983} 
and Billingsley \cite{billingsley1983singular}. In our case, the monotone functions $f(\cdot,p)$ and $f(x,\cdot)$   exhibit significantly different behaviors in terms of regularity:
\begin{itemize}
    \item For $p\in (0,1/2)$, the function $f(\cdot,p)$ is H\"older continuous with exponent bounded above by $\log_{1/2}(1-p)$, and is singular continuous, i.e., $f(\cdot,p)$ is continuous in $x$ and $\partial_x f(x,p)=0$ for almost every $x$ with respect to the Lebesgue measure, see Theorem \ref{holderthm} and Theorem \ref{derivativex0} (i) below.
    \item For $x>2$, the function $f(x,\cdot)$ is analytic on $(0,1/2)$, see Theorem \ref{derivativex0} (ii) below.
\end{itemize}

\begin{theorem}
\label{holderthm}
    For any $p\in (0,1/2)$ and $x\geq 2,h>0$, one has
    \begin{equation}
        \label{holderine}
      0 < f(x,p)-f(x+h,p) \leq \frac{C}{(1-2p)^2}\left(h^{\beta p^2}+h^{\beta (1-2p)^3}\right),  
    \end{equation}
    where $C$ and $\beta$ are two positive constants. Moreover, for $p\in (0,1/2)$, we have
   \begin{equation}
       \label{exponentat2}
     \lim_{x\to 2+} \frac{\log (f(2,p)-f(x,p))}{\log (x-2)} = \log_{1/2}(1-p). 
   \end{equation}
\end{theorem}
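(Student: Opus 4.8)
The plan is to work with the scale-invariant chain $R_n:=W_n/B_{n+1}$. A short computation from (\ref{defWBnplus1}) gives $R_0=x$ and $R_{n+1}=(R_n+\xi_{n+1})2^{-\xi_{n+1}}$, so ruin is exactly $\{R_n\le 0\text{ for some }n\}$; since $f(y,p)=1$ for $y\le 2$ by Theorem \ref{f1conditions} and the chain cannot pass from $(2,\infty)$ to $(-\infty,0]$ in a single step, we have $f(x,p)=\PP(\sigma_x<\infty)$ with $\sigma_x:=\inf\{n:R_n^{(x)}\le 2\}$. Running the chains from $x$ and $x+h$ with the \emph{same} noise, linearity of the two update maps gives $R_n^{(x+h)}-R_n^{(x)}=h\,2^{S_n}$, where $S_n:=\#\{i\le n:\xi_i=-1\}-\#\{i\le n:\xi_i=1\}$ is a random walk with drift $1-2p>0$. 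Hence $R_n^{(x)}\le R_n^{(x+h)}$, so $\{\sigma_{x+h}<\infty\}\subseteq\{\sigma_x<\infty\}$, and the strong Markov property at $\sigma_x$ (where $R^{(x)}_{\sigma_x}\le 2$, so $R^{(x+h)}_{\sigma_x}-2\le h\,2^{S_{\sigma_x}}$, and $R^{(x+h)}$ has not died before $\sigma_x$) yields
\[
f(x,p)-f(x+h,p)=\EE\Bigl[\mathds 1_{\{\sigma_x<\infty\}}\,g\bigl(R^{(x+h)}_{\sigma_x},p\bigr)\Bigr],\qquad g:=1-f.
\]
The left inequality follows from this identity: $\PP(\sigma_x<\infty)>0$, and one can produce a positive-probability sub-event on which in addition $R^{(x+h)}_{\sigma_x}>2$ (steer $R^{(x)}$ into $(2,3)$, then make several near-fatal excursions---losses pushing $R-2$ up near $1^{+}$ followed by a win---to inflate $S$ before the final ruinous win), while $g(y,p)>0$ for $y>2$.

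The key analytic input is a local estimate for $g$ near $2$. For $x=2+d$ with $d\in(0,1]$ the win branch of (\ref{fequself}) lands in $(-\infty,2]$, where $g\equiv 0$, so the functional equation collapses to the exact recursion $g(2+d,p)=(1-p)\,g(2+2d,p)$. Iterating while $2^{j-1}d\le 1$ and stopping at $j=\lfloor\log_2(1/d)\rfloor$ (so $2^{j}d\in(\tfrac12,1]$) gives $g(2+d,p)=(1-p)^{j}\,g(2+2^{j}d,p)$; using $(1-p)=2^{-\alpha}$ with $\alpha:=\log_{1/2}(1-p)$, and $0<g(5/2,p)\le g(2+2^{j}d,p)\le g(3,p)\le 1$, one obtains $c_p\,d^{\alpha}\le g(2+d,p)\le C_p\,d^{\alpha}$ on $(0,1]$ with explicit constants. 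The refined two-sided form is exactly (\ref{exponentat2}): $\log g(2+d,p)=j(d)\log_2(1-p)+O(1)$ with $j(d)=\log_2(1/d)+O(1)$, so $\log g(2+d,p)/\log d\to-\log_2(1-p)=\log_{1/2}(1-p)$.

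For the upper bound in (\ref{holderine}) (assume $h\le1$) feed $g(2+s,p)\le C_p s^{\alpha}$ for all $s>0$ (trivial for $s\ge1$) into the identity and use $R^{(x+h)}_{\sigma_x}-2\le h\,2^{S_{\sigma_x}}$ to get $f(x,p)-f(x+h,p)\le C_p\,h^{\alpha}\,K(x,p)$, where $K(x,p):=\EE[\mathds 1_{\{\sigma_x<\infty\}}2^{\alpha S_{\sigma_x}}]$. A first-step decomposition (the win contribution carrying $2^{-\alpha}=1-p$, the loss contribution $2^{\alpha}(1-p)=1$) turns this into a linear recursion for $K$, equivalently $K(x,p)=\sum_\pi (p(1-p))^{\#\mathrm{wins}(\pi)}$ over minimal paths $\pi$ driving $R^{(x)}$ into $(-\infty,2]$; the number of such paths with $k$ wins is $O(4^{k})$, so the series converges precisely because $4p(1-p)=1-(1-2p)^2<1$---this is where $p<1/2$ and the factor $(1-2p)^{-2}$ enter. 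Uniformity in $x$ is then obtained by interpolation: for $x-2>h$ use the bound above (near $x=2$ the recursion $K(2+d,p)=p(1-p)+K(2+2d,p)$ adds only an extra $\log(1/(x-2))$, absorbed into the power of $h$ after splitting off small and large values of $p$), and for $x-2\le h$ use instead $f(x,p)-f(x+h,p)\le g(x+h,p)\le C_p(x+h-2)^{\alpha}\le C_p(2h)^{\alpha}$; collecting the resulting powers of $h$ gives (\ref{holderine}).

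The hard part is this last uniform step. The loss branch of (\ref{fequself}) sends a pair at distance $h$ to a pair at distance $2h$, so a naive modulus-of-continuity recursion for $g$ does not close; one must genuinely exploit the positive drift of $S$ to damp the accumulated weight of these expanding branches, and do so uniformly down to $x=2$ and across all $p\in(0,1/2)$. It is precisely the two degenerations---$\alpha\to0$ as $p\to0$ and $K(x,p)\to\infty$ as $p\to1/2$---that prevent the clean exponent $\log_{1/2}(1-p)$ of (\ref{exponentat2}) from being attained uniformly and force the (non-optimal) exponents $\beta p^2$ and $\beta(1-2p)^3$.
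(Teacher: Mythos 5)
Your coupling identity $f(x,p)-f(x+h,p)=\EE\bigl[\mathds{1}_{\{\sigma_x<\infty\}}\,g(R^{(x+h)}_{\sigma_x},p)\bigr]$ is correct, and your treatment of (\ref{exponentat2}) via the exact collapse $g(2+d,p)=(1-p)g(2+2d,p)$ for $d\in(0,1]$ is sound (it is essentially the paper's argument, which conditions on the first $k$ steps being losses, rewritten through the functional equation). The strict positivity sketch is also repairable. The problem is the upper bound in (\ref{holderine}), where everything is made to rest on the finiteness and size of $K(x,p)=\EE[\mathds{1}_{\{\sigma_x<\infty\}}2^{\alpha S_{\sigma_x}}]=\sum_{\pi}(p(1-p))^{\#\mathrm{wins}(\pi)}$, and there you assert rather than prove the decisive fact: that the number $N_k(x)$ of minimal crossing paths with $k$ wins is $O(4^k)$. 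Note that your own first-step recursion has loss-coefficient exactly $1$ (e.g.\ $K(2+d,p)=p(1-p)+K(2+2d,p)$), so it is non-contractive and cannot by itself yield finiteness of $K$; everything hinges on the counting claim. That claim is exactly the hard point: a per-step count of admissible loss-block lengths gives only factorial-type upper bounds, the constant cannot be uniform in $x$ (already for $k=1$ the count is of order $\log_2\frac{1}{x-2}$, and your ``absorb the log'' patch is itself only sketched), and a matching lower bound construction (interleaving $\approx k$ wins and $\approx k$ losses while the chain stays in a wide logarithmic window) shows $N_k(x)\gtrsim 4^k/\mathrm{poly}(k)$, so there is no slack: you would need a sharp ballot-type estimate for this expanding map, with control of the near-miss excursions where a single win drops $X$ by many dyadic scales and permits long runs of extra losses, to show the rate is not strictly larger than $4$ and to extract the claimed $(1-2p)^{-2}$ blow-up. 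None of this is in the proposal; you even flag it yourself as ``the hard part'' without supplying it. Moreover, if your bound $K(x,p)\lesssim (1-2p)^{-2}$ did hold, you would obtain a modulus of continuity with exponent essentially $\log_{1/2}(1-p)$ (up to a logarithmic loss), uniformly up to $p\to 1/2$ --- a result substantially stronger than the theorem; that is not impossible, but it cannot be had for the price of one unproved sentence.

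For contrast, the paper avoids any weighted path count: it fixes $k\approx\log_4(1/h)$ and splits on whether ruin has occurred by time $k$. If it has, then $S\le x-2+4^{-k}$ forces the next $k$ increments of $S$ to be losses, giving $(1-p)^k$. If it has not, it splits again on the occupation time $T_k$ above the level $R$ of (\ref{defR}): when $T_k>3k/4$, the Hoeffding--Azuma inequality applied to the submartingale built from $\log X$ (Lemma \ref{Tklarger34lem}) shows that $X_k$ is exponentially large and that a later return below $R$ is exponentially unlikely, producing the factor $C(1-2p)^{-2}e^{-\beta(1-2p)^3k}$; when $T_k\le 3k/4$, each visit below $R$ gives a chance $p^{\lceil\log_2 R\rceil+1}$ of immediate ruin (Lemma \ref{Tkles34lem}), so surviving to time $k$ is exponentially unlikely, producing the $2^{-\beta p^2k}$ term. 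This is why the paper's exponents degrade as $p\to 0$ and $p\to 1/2$ but come with complete proofs; your route would need the missing $O(4^k)$ counting lemma (uniform in $x$, with the precise polynomial correction) before it can be considered a proof of (\ref{holderine}).
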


\begin{theorem}
    \label{derivativex0}
 (i) For $p<1/2$, the function $f(\cdot,p)$ is singular continuous. \\
 (ii)  For $x>2$, the function $f(x,\cdot)$ is a real analytic function on $(0,1/2)$.
\end{theorem}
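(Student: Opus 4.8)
Write $g(x):=f(x,p)$. By Theorem~\ref{holderthm} the function $g$ is continuous, and it is decreasing with $g(2^{+})=1$, $g(\infty)=0$; hence $\mu:=-\,dg$ is a continuous probability measure on $[2,\infty)$, and part~(i) is exactly the assertion that $\mu\perp\mathrm{Leb}$. The clean description of $\mu$ is as the law of the \emph{critical fortune}
$$
M:=\sup_{n\ge 1}\Big(-\sum_{k=1}^{n}\xi_{k}B_{k}\Big),
$$
since $\{\exists n:\ W_{n}\le 0\}=\{x\le M\}$ gives $g(x)=\PP(M\ge x)$; one has $2<M<\infty$ a.s.\ (finiteness because $\sum_{k}B_{k}<\infty$ a.s., the walk $S_{n}=\xi_{1}+\dots+\xi_{n}$ having negative drift). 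Conditioning on $\xi_{1}$ and rescaling exactly as in \eqref{fequself} yields $M\stackrel{d}{=}2M'-1$ with probability $p$ and $M\stackrel{d}{=}M'/2+1$ with probability $1-p$ (with $M'\stackrel{d}{=}M$ independent), i.e.
$$
\mu=p\,(S_{0})_{*}\mu+(1-p)\,(S_{1})_{*}\mu,\qquad S_{0}(y)=2y-1,\quad S_{1}(y)=\tfrac{y}{2}+1 .
$$
Thus $\mu$ is the stationary measure of the affine IFS $\{S_{0},S_{1}\}$ with weights $\{p,1-p\}$, which is contracting on average because $p\log 2+(1-p)\log\tfrac12=(2p-1)\log 2<0$; such an IFS has a \emph{unique} stationary probability measure, which forces $\mu$ to be of pure type — purely absolutely continuous or purely singular continuous (it is continuous) — since the absolutely continuous and singular parts of $\mu$ would each be stationary.

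\textbf{Part~(ii): analyticity in $p$.} Fix $x>2$. Unfolding the recursion \eqref{recursiveequ} and using that the renormalised fortune ($\widehat W\mapsto(\widehat W+1)/2$ after a win, $\widehat W\mapsto 2\widehat W-2$ after a loss) is absorbed once it reaches $(-\infty,2]$, one gets
$$
f(x,p)=\sum_{w\in\mathcal G} p^{a(w)}(1-p)^{b(w)},
$$
the sum over the prefix-free set $\mathcal G$ of first-ruin words ($0=$ win, $1=$ loss; $a(w),b(w)$ the counts of $0$'s and $1$'s), with $R_{k}(x,p):=\sum_{w\in\mathcal G,\ |w|=k}p^{a(w)}(1-p)^{b(w)}$ the probability that ruin (in the renormalised sense) first occurs at step $k$. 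The key estimate is $R_{k}(x,p)\le C\lambda^{k}$ with $\lambda=\lambda(x,p)<1$, uniformly for $(x,p)$ in compact subsets of $(2,\infty)\times(0,1/2)$: indeed $R_{k}\le f(x,p)-f_{k-1}(x,p)=\EE\big[\mathds 1\{\tau>k-1\}f(\widehat W_{k-1},p)\big]$ where $\tau$ is the renormalised ruin time, and on $\{\tau>j\}$ the surplus $\widehat W_{j}-2$ is multiplied by $2$ (loss) or by roughly $\tfrac12$ (win), so $\log(\widehat W_{j}-2)$ has strictly positive drift $(1-2p)\log 2$; by a Cramér bound $\widehat W_{k-1}$ is geometrically large except on an exponentially small set, and a polynomial tail bound on $f(\,\cdot\,,p)$ (from $M\le\sum_k B_k$ and $\sum_k B_k\stackrel{d}{=} 1+2^{\xi_1}\sum_k B_k$) then makes the expectation exponentially small. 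Granting this, for $z$ in a small complex disc $D$ around a given $p_{0}\in(0,1/2)$ one has $|z|^{a(w)}|1-z|^{b(w)}\le(1+\delta)^{|w|}p_{0}^{a(w)}(1-p_{0})^{b(w)}$ with $\delta\to 0$ as $\mathrm{diam}\,D\to 0$, so $\sum_{w\in\mathcal G}|z|^{a(w)}|1-z|^{b(w)}\le\sum_{k}(1+\delta)^{k}R_{k}(x,p_{0})<\infty$ once $D$ is small. Hence $\sum_{w\in\mathcal G}z^{a(w)}(1-z)^{b(w)}$ converges absolutely and uniformly on $D$; as a locally uniform limit of polynomials it is holomorphic on $D$ and agrees with $f(x,\cdot)$ on $D\cap\RR$. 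Therefore $f(x,\cdot)$ is real-analytic on $(0,1/2)$.

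\textbf{Part~(i): singularity, and the main obstacle.} By purity it suffices to show $\mu$ is not absolutely continuous, for which $\dim_{H}\mu<1$ would suffice. Coding $\mu=\pi_{*}\mathrm{Ber}(p)$ via $\pi(\omega)=\lim_{n}S_{\omega_{1}}\!\circ\cdots\circ S_{\omega_{n}}(0)$, the SLLN shows a level-$n$ cylinder about a $\mu$-typical point carries mass $e^{-(h(p)+o(1))n}$ at scale $2^{(2p-1+o(1))n}$ (here $h(p)=-p\log p-(1-p)\log(1-p)$), whence $\dim_{H}\mu\le h(p)/\big((1-2p)\log 2\big)$; this is $<1$ for $p$ small and already gives singularity there. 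For $p$ closer to $1/2$ this bound is $>1$ and useless, so one must exploit the finer structure of the IFS: the ratios of $S_{0},S_{1}$ are $2$ and $2^{-1}$, and there are \emph{exact overlaps} — e.g.\ $S_{0}S_{1}S_{1}S_{0}=S_{1}S_{0}S_{0}S_{1}$ (both equal $y\mapsto y+\tfrac32$), reflecting that $S_{0}S_{1}$ and $S_{1}S_{0}$ are the commuting translations by $1$ and by $\tfrac12$. The plan is to turn this into a genuine gain, either by bounding the Garsia (random-walk) entropy $h_{G}$ of $\mu$ strictly below $(1-2p)\log 2$ so that $\dim_{H}\mu=h_{G}/\big((1-2p)\log 2\big)<1$ throughout, or — should $\mu$ turn out to have full dimension near $p=1/2$ — by a Fourier argument: from $\widehat\mu(t)=p\,e^{-it}\widehat\mu(2t)+(1-p)\,e^{it}\widehat\mu(t/2)$ together with the sharp boundary scaling $\mu([2,2+\varepsilon])\asymp\varepsilon^{\log_{1/2}(1-p)}$ of \eqref{exponentat2} and its $S_{w}$-rescaled copies at the dense orbit $\{S_{w}(2)\}$, show $\widehat\mu(t)\not\to 0$, contradicting absolute continuity. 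I expect this last point — extracting, from the commensurable contraction ratios, that $\mu$ cannot have an $L^{1}$ density across the whole range $(0,1/2)$ — to be the crux; the naive $L^{q}$-norm and Lebesgue-density estimates turn out to be self-consistent and do not by themselves yield a contradiction.
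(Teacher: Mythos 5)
Your part (i) is not a proof: it is complete only on the range of $p$ where the entropy--Lyapunov bound $h(p)/\bigl((1-2p)\log 2\bigr)<1$ holds, i.e.\ for small $p$, and for $p$ closer to $1/2$ you yourself say the crux (a Garsia-entropy gain from the exact overlaps, or a Fourier non-decay argument) is left as a plan. The theorem claims singularity for \emph{all} $p<1/2$, so this is a genuine gap, and it is exactly where the paper does something different: writing $S=\sum_{i\ge 0}A_i2^{-i}$ with $\NN$-valued i.i.d.\ digits $A_i$ of full support, it shows (Lemma \ref{lemDjkI}, via an ergodic theorem for an explicit Markov chain on $\NN$) that the asymptotic frequency of every binary block of $S$ equals $\PP(D_{0,k}(S)=\ell)$ almost surely. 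If the law of $S$ had a nonzero absolutely continuous part, $S$ would be Borel-normal with positive probability, forcing $\PP(D_{0,k}(S)=0)=2^{-k}$ for every $k$; but the boundary estimate (\ref{lwbdf2}) gives $\PP(S\le 2^{-k})\ge (1-f(3,p))(1-p)^k\gg 2^{-k}$, a contradiction, for every $p<1/2$ at once. (Your purity reduction via uniqueness of the stationary measure of the average-contracting IFS is fine as far as it goes, but it does not remove the need to rule out absolute continuity near $p=1/2$.)

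For part (ii) your strategy coincides with the paper's: expand $f(x,\cdot)$ over first-ruin words as $\sum_w p^{a(w)}(1-p)^{b(w)}$, prove that the step-$k$ first-ruin probability decays like $C\lambda^k$ uniformly on compacts of $(2,\infty)\times(0,1/2)$, and then use nonnegativity of the coefficients to bound $|z|^{a(w)}|1-z|^{b(w)}\le (1+\delta)^{|w|}p_0^{a(w)}(1-p_0)^{b(w)}$ on a small complex disc, getting a uniformly convergent sequence of polynomials and hence holomorphy (this is the paper's proof verbatim, with $g_k(p)=\PP(X_k>0,\,X_{k+1}\le 0)$). The only soft spot is your justification of the exponential estimate: the claim that $\log(\widehat W_j-2)$ has drift $(1-2p)\log 2$ on survival is false near the boundary, since a win from a surplus slightly above $1$ sends the log increment to $-\infty$ (indeed ruin), so a one-line Cram\'er bound does not apply. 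The paper's Lemmas \ref{Tklarger34lem} and \ref{Tkles34lem} supply exactly the missing case analysis: when the surplus spends most of the time above a threshold $R$, an Azuma bound for a bounded-increment submartingale gives the drift estimate, and when it spends a constant fraction of time below $R$, ruin occurs within $O(\log R)$ steps with probability bounded below, giving geometric decay. With that estimate granted (or cited), your complexification step is correct.
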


\section{Proof of Theorem \ref{f1conditions}}
For $n\in \NN$, we let
\begin{equation}
    \label{defXYn}
    Y_n:=\frac{W_n}{B_{n+1}} \quad \text{and}\quad X_n=Y_n-2.
\end{equation}
The proof of Theorem \ref{f1conditions} is based on the following two lemmas. 

\begin{lemma}
\label{Ynleq2lem}
  If $Y_k\leq 2$ for some $k\in \NN$, then the gambler goes bankrupt eventually a.s.. 
\end{lemma}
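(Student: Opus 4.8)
The plan is to reduce the whole evolution to the single quantity $Y_n=W_n/B_{n+1}$ introduced in (\ref{defXYn}), show that once $Y_k\le 2$ the sequence $(Y_n)_{n\ge k}$ is trapped in a half-line from which any two consecutive losses push $W$ below $0$, and then close with a Borel--Cantelli argument.

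First I would write down the one-step recursion for $Y_n$. Using $W_n=W_{n-1}+\xi_n B_n$, $B_{n+1}=B_n 2^{\xi_n}$ and $W_{n-1}/B_n=Y_{n-1}$, one gets $Y_n=2^{-\xi_n}(Y_{n-1}+\xi_n)$; that is, $Y_n=(Y_{n-1}+1)/2$ on a win ($\xi_n=1$) and $Y_n=2Y_{n-1}-2$ on a loss ($\xi_n=-1$), so the two moves are exactly the maps $x\mapsto (x+1)/2$ and $x\mapsto 2x-2$ appearing in (\ref{recursiveequ}). Since $B_{n+1}>0$, the event ``$W_n\le 0$'' coincides with ``$Y_n\le 0$''.

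Next I would record two elementary forward-invariance facts, both immediate from the two update maps: if $Y_m\le 2$ then $Y_{m+1}\le 2$, and moreover a win sends $Y$ into $(-\infty,3/2]$; and if $Y_m\le 3/2$ then $Y_{m+1}\le 3/2$. Hence $Y_k\le 2$ forces $Y_n\le 2$ for all $n\ge k$. Since $p\in(0,1)$, almost surely there is a first index $j_0>k$ with $\xi_{j_0}=1$; as $Y_{j_0-1}\le 2$, the first fact gives $Y_{j_0}\le 3/2$, and then the second gives $Y_n\le 3/2$ for every $n\ge j_0$. Finally, from any state with $Y\le 3/2$ a loss gives $2Y-2\le 1$ and a further loss gives $\le 0$, so whenever $\xi_{j+1}=\xi_{j+2}=-1$ for some $j\ge j_0$, the gambler is ruined by time $j+2$. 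The events $A_m=\{\xi_{j_0+2m+1}=-1,\ \xi_{j_0+2m+2}=-1\}$, $m\ge 0$, use disjoint blocks of indices, hence are independent, with $\PP(A_m)=(1-p)^2$, so $\sum_m\PP(A_m)=\infty$; by the second Borel--Cantelli lemma some $A_m$ occurs almost surely. Combined with the almost sure existence of $j_0$, this gives ruin almost surely.

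The step needing the most care is the choice of the trapping window: one wants a forward-invariant half-line small enough that a \emph{fixed} number of consecutive losses --- two here --- forces ruin regardless of the intervening wins, and the half-line $\{Y\le 3/2\}$ does exactly this. Getting this numerology right, and handling the boundary case $Y_k=2$ (note $Y=2$ is a fixed point of the loss map, so a win is genuinely needed to start the descent, which is the one place $p>0$ is used), is the only subtle point; the rest is routine bookkeeping.
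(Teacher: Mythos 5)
Your proof is correct and follows essentially the same route as the paper: the one-step recursion $Y_n=2^{-\xi_n}(Y_{n-1}+\xi_n)$, forward-invariance of $\{Y\le 2\}$ and then of $\{Y\le 3/2\}$ after the first win, and then losses driving $Y$ below $0$. The only cosmetic difference is that you demand two \emph{consecutive} losses and close with the second Borel--Cantelli lemma (where, since $j_0$ is a random time, you should either invoke the strong Markov property or simply note that infinitely many consecutive-loss pairs occur almost surely at deterministic positions, hence some pair after $j_0$), whereas the paper observes that $\{Y\le 1\}$ is also forward-invariant, so any two losses after the first win suffice.
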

\begin{proof}
By definition, one has 
\begin{equation}
    \label{Ynrecursion}
   Y_{n+1}= \left \{ \begin{aligned} &\frac{Y_n+1}{2},  && \text{if } \xi_{n+1}=1, \\ & 2(Y_n-1), && \text{if } \xi_{n+1}=-1. \end{aligned} \right.
\end{equation}
Therefore, by induction, one has $Y_n\leq 2$ for all $n\geq k$. Since $p>0$, a.s. there exists (a random) $k_1> k$ such that $\xi_{k_1}=1$, and in particular, $Y_{k_1}\leq 3/2$. Again, by induction, we see from (\ref{Ynrecursion}) that $Y_n\leq 3/2$ for all $n\geq k_1$. Since $p<1$, a.s. there exist $k_3>k_2> k_1$ such that $\xi_{k_3}=\xi_{k_2}=-1$. Similarly, one can show that 
$$Y_n\leq 1,\ \text{for all } n\geq k_2; \quad Y_{k_3}\leq 0,$$
which completes the proof.
\end{proof}

Let $S_0:=0$, and $S_n:=\sum_{i=1}^n\xi_i$ for $n\geq 1$ where $(\xi_n)_{n\in \NN}$ are as in Section \ref{secintro}. Define
\begin{equation}
    \label{defS}
  S:=\sum_{n=1}^{\infty}\mathds{1}_{\{\xi_n=1\}}2^{S_{n-1}}. 
\end{equation}

\begin{lemma}
\label{ruinprobexpresslem}
  Let $x> 2$ and let $S$ be as in (\ref{defS}). One has 
    $f(x,p)=\PP(S > x-2)$.
\end{lemma}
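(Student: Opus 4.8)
The plan is to solve the recursion for the normalized wealth $Y_n$ explicitly in terms of the simple random walk $S_n=\sum_{i=1}^n\xi_i$, recognize the resulting quantity as a truncation of the series $S$ in (\ref{defS}), and then invoke Lemma \ref{Ynleq2lem} to replace the unwieldy event $\{W_n\le 0\ \text{for some }n\}$ by a clean event about $S$.

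First I would record the identity $B_{n+1}=2^{S_n}$, which follows by induction from $B_1=1$ and $B_{n+1}=B_n2^{\xi_n}$, and introduce the partial sums $T_n:=\sum_{i=1}^n\mathds{1}_{\{\xi_i=1\}}2^{S_{i-1}}$, so that $(T_n)_{n\ge 0}$ is non-decreasing with $T_n\uparrow S$. The key step is the closed-form identity
\[
Z_n:=2^{S_n}X_n=2^{S_n}(Y_n-2)=(x-2)-T_n\qquad\text{for every }n\ge 0.
\]
I would prove this by induction on $n$ using the recursion (\ref{Ynrecursion}): clearly $Z_0=x-2$, and a one-line computation gives $Z_{n+1}=Z_n-2^{S_n}$ on $\{\xi_{n+1}=1\}$ and $Z_{n+1}=Z_n$ on $\{\xi_{n+1}=-1\}$; summing these increments over the first $n$ steps yields $Z_n=(x-2)-\sum_{i=1}^n\mathds{1}_{\{\xi_i=1\}}2^{S_{i-1}}=(x-2)-T_n$. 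Since $2^{S_n}>0$, this produces the equivalence $Y_n<2\iff T_n>x-2$.

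It then remains to transfer the ruin event. Because $B_{n+1}>0$, the event $\{W_n\le 0\ \text{for some }n\}$ equals $\{Y_n\le 0\ \text{for some }n\}$, which is contained in $\{Y_n<2\ \text{for some }n\}\subseteq\{Y_n\le 2\ \text{for some }n\}$; by Lemma \ref{Ynleq2lem} the gambler is ruined almost surely on $\{Y_n\le 2\ \text{for some }n\}$, so these three events all have probability $f(x,p)$. Combining this with the equivalence above and the monotone convergence $T_n\uparrow S$,
\[
f(x,p)=\PP\bigl(Y_n<2\ \text{for some }n\bigr)=\PP\bigl(T_n>x-2\ \text{for some }n\bigr)=\PP\bigl(\sup_n T_n>x-2\bigr)=\PP(S>x-2),
\]
which is the claim.

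I do not expect a genuine obstacle here: the algebra behind $Z_n$ is routine, and the only point that needs care is the bookkeeping with strict versus non-strict inequalities — passing through $\{Y_n<2\}$ rather than $\{Y_n\le 2\}$ is exactly what makes the right-hand side come out as $\PP(S>x-2)$ with no exceptional null set left over (an alternative is to work with $\{Y_n\le 2\}$ and separately verify $\PP(S=x-2)=0$, using $p\in(0,1)$ to exclude the event that $T_n$ first reaches the level $x-2$ and is never increased again). The role of Lemma \ref{Ynleq2lem} is essential: without it one would only reach $\{Y_n\le 0\ \text{for some }n\}$, which does not telescope, whereas the slightly larger event $\{Y_n<2\ \text{for some }n\}$ is precisely the one that the identity for $Z_n$ converts into $\{S>x-2\}$.
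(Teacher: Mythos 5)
Your proposal is correct and follows essentially the same route as the paper: your identity $2^{S_n}X_n=(x-2)-T_n$ is exactly the paper's closed form (\ref{defXn}), and both arguments combine it with Lemma \ref{Ynleq2lem} and the monotone limit $T_n\uparrow S$ to identify the ruin event with $\{S>x-2\}$. The only (immaterial) difference is how the boundary is handled: you work with the strict event $\{Y_n<2 \text{ for some } n\}$ and $\sup_n T_n=S$, while the paper uses the a.s.\ strict inequality $S>T_n$ to pass from $\{Y_n>2 \text{ for all } n\}$ to $\{S\le x-2\}$.
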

\begin{proof}
   By (\ref{defXYn}) and (\ref{Ynrecursion}), one has
   \begin{equation}
    \label{Xnrecursion}
   X_{n+1}= \left \{ \begin{aligned} &\frac{X_n-1}{2},  && \text{if } \xi_{n+1}=1, \\ & 2X_n, && \text{if } \xi_{n+1}=-1. \end{aligned} \right.
\end{equation}
One can then easily check by induction that 
\begin{equation}
    \label{defXn}
    X_n=2^{-S_n}(x-2)-\sum_{i=1}^n\mathds{1}_{\{\xi_i=1\}}2^{-S_{n}+S_{i-1}}, \quad n\geq 1.
\end{equation}
Therefor, $Y_n>2$ if and only if
\begin{equation}
    \label{surviveXn}
    \sum_{i=1}^n\mathds{1}_{\{\xi_i=1\}}2^{S_{i-1}} < x-2.
\end{equation}
By Lemma \ref{Ynleq2lem}, the gambler survives eventually if and only if $Y_n>2$ for all $n$, or equivalently, 
$$S =\sum_{n=1}^{\infty}\mathds{1}_{\{\xi_n=1\}}2^{S_{n-1}}\leq x-2,$$ where we used that $S>\sum_{i=1}^n\mathds{1}_{\{\xi_i=1\}}2^{S_{i-1}}$ a.s. for any $n$.
\end{proof}

Now we are ready to prove Theorem \ref{f1conditions}.

\begin{proof}[Proof of Theorem \ref{f1conditions}]
   By Lemma \ref{Ynleq2lem}, if $x=Y_0\leq 2$, then $f(x,p)=1$. Let $\FF=(\FF_n)_{n\geq 1}$ be the filtration generated by $(\xi_n)_{n\geq 1}$, i.e. $\FF_n:=\sigma(\xi_m, n\leq n)$. By the conditional Borel-Cantelli lemma, see e.g. \cite[Theorem 4.3.4]{MR3930614}, 
    $$
    \sum_{n=1}^\infty \mathds{1}_{\{\xi_n=1,S_{n-1}\geq 0\}}=\infty \quad \text{a.s. on} \ \{\sum_{n=2}^\infty \mathds{1}_{\{S_{n-1}\geq 0\}}\PP(\xi_n=1\mid \FF_{n-1})=\infty\}.
    $$
   Note that $\PP(\xi_n=1\mid \FF_{n-1})=p$. If $p\geq 1/2$, then $S_n\geq 0$ infinitely often a.s., and in particular, 
   $$
   S\geq \sum_{n=1}^\infty \mathds{1}_{\{\xi_n=1,S_{n-1}\geq 0\}}=\infty, \quad \text{almost surely.}
   $$
 Therefore, by Lemma \ref{ruinprobexpresslem}, $f(x,p)=1$ if $p\geq 1/2$. 
 
 It remains to show that $\PP(S\leq x-2)>0$ if $x>2$ and $p<1/2$. Let $\varepsilon:=1/2-p<- \mathbf{E}\xi_1$. By the strong law of large numbers, there exists a positive integer $N_1$ such that 
 $$
\PP(E_{N_1}) \geq \frac{1}{2}, \quad \text{where}\ E_{N_1}:=\bigcap_{n\geq N_1}^{\infty} \{S_n\leq -\varepsilon n\}.
 $$
By possibly choosing a larger $N_1$, we have, 
 \begin{equation}
     \label{Sx2goodset}
     S \leq \sum_{n=N_1}^{\infty}2^{S_{n}}\leq \frac{2^{-\varepsilon N_1}}{1-2^{-\varepsilon}} <x-2 \quad \text{on}\ \left(\bigcap_{i=1}^{N_1}\{\xi_{i}=-1\}\right)\bigcap E_{N_1}.
 \end{equation}
 A coupling argument implies that for any $(a_i)_{1\leq i \leq N_1} \in \{-1,1\}^{N_1}$,
 $$
\PP(E_{N_1}\mid \bigcap_{i=1}^{N_1}\{\xi_{i}=-1\}) \geq \PP(E_{N_1}\mid \bigcap_{i=1}^{N_1}\{\xi_{i}=a_i\}).
 $$
 In particular, the total probability theorem gives
 $$
\PP\left((\bigcap_{i=1}^{N_1}\{\xi_{i}=-1\})\bigcap E_{N_1}\right)=(1-p)^{N_1} \PP(E_{N_1}\mid \bigcap_{i=1}^{N_1}\{\xi_{i}=-1\})\geq (1-p)^{N_1} \PP(E_{N_1}),
 $$
which completes the proof in view of (\ref{Sx2goodset}).
\end{proof}

\section{Ruin probability as a function of the initial fortune} 
In this section, we prove Theorem \ref{holderthm} and Theorem \ref{derivativex0} (i). We first recall the Hoeffding-Azuma inequality, see \cite{MR0221571}.

\begin{lemma}[Hoeffding-Azuma inequality]
\label{Azumaine}
    Let $(M_n)_{n\in \NN}$ be a martingale (or submartingale) such that, for all $1\leq j \leq n$, we can find positive constant $c_j$ satisfying $|M_j-M_{j-1}|\leq c_j$ almost surely. Then, for any positive $L$,
    $$
\PP(M_n-M_0 \leq -L) \leq e^{-\frac{L^2}{2\sum_{k=1}^nc_k^2}}.
    $$
\end{lemma}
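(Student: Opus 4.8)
The plan is to run the standard Chernoff-bound argument on the martingale-difference sequence $D_j := M_j - M_{j-1}$, which satisfies $\EE[D_j \mid \FF_{j-1}] \geq 0$ (with equality in the martingale case) and $|D_j| \leq c_j$ almost surely, where $(\FF_n)$ denotes the underlying filtration. Fix $\lambda > 0$. Since $M_n - M_0 = \sum_{j=1}^n D_j$, Markov's inequality applied to the nonnegative random variable $e^{-\lambda(M_n - M_0)}$ gives
\[
\PP(M_n - M_0 \leq -L) = \PP\!\left(e^{-\lambda(M_n - M_0)} \geq e^{\lambda L}\right) \leq e^{-\lambda L}\, \EE\!\left[e^{-\lambda \sum_{j=1}^n D_j}\right],
\]
so the task reduces to bounding the exponential moment on the right.

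Next I would peel the differences off one index at a time. Using the tower property together with the $\FF_{n-1}$-measurability of $D_1,\dots,D_{n-1}$,
\[
\EE\!\left[e^{-\lambda \sum_{j=1}^n D_j}\right] = \EE\!\left[e^{-\lambda \sum_{j=1}^{n-1} D_j}\, \EE\!\left[e^{-\lambda D_n} \mid \FF_{n-1}\right]\right].
\]
The crucial input is a conditional Hoeffding estimate: almost surely $\EE[e^{-\lambda D_n} \mid \FF_{n-1}] \leq e^{\lambda^2 c_n^2/2}$. Granting this, substitute and repeat for $D_{n-1},\dots,D_1$ to get $\EE[e^{-\lambda(M_n - M_0)}] \leq \exp\!\big(\tfrac{\lambda^2}{2}\sum_{j=1}^n c_j^2\big)$. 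Combined with the first display, this yields $\PP(M_n - M_0 \leq -L) \leq \exp\!\big(-\lambda L + \tfrac{\lambda^2}{2}\sum_{j=1}^n c_j^2\big)$ for every $\lambda > 0$; optimizing with $\lambda = L/\sum_{j=1}^n c_j^2$ makes the exponent equal to $-L^2/\big(2\sum_{j=1}^n c_j^2\big)$, which is the claim.

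The only nontrivial step, and the one place where the hypotheses are used, is the conditional Hoeffding estimate; I expect it to be the main (and essentially only) obstacle, the rest being bookkeeping. I would prove it pointwise in $\omega$: on $[-c_n, c_n]$ the convex function $t \mapsto e^{-\lambda t}$ lies below the chord through its endpoints, so
\[
e^{-\lambda D_n} \leq \frac{c_n - D_n}{2c_n}\, e^{\lambda c_n} + \frac{c_n + D_n}{2c_n}\, e^{-\lambda c_n}
\]
almost surely. Taking $\EE[\,\cdot \mid \FF_{n-1}]$ and writing $\mu := \EE[D_n \mid \FF_{n-1}] \geq 0$, the right-hand side becomes affine in $\mu$ with negative slope (since $e^{\lambda c_n} > e^{-\lambda c_n}$ for $\lambda, c_n > 0$), hence is maximized at $\mu = 0$, giving $\EE[e^{-\lambda D_n} \mid \FF_{n-1}] \leq \cosh(\lambda c_n)$. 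Finally $\cosh(t) \leq e^{t^2/2}$ follows by comparing Taylor coefficients, using $(2k)! \geq 2^k k!$. It is worth remarking that the symmetric bound $|D_j| \leq c_j$ is exactly what produces the clean constant $c_n^2/2$ in the exponent, as opposed to $(b-a)^2/8$ for a difference merely confined to an interval of length $b-a$; the submartingale case is covered at no extra cost because the drift $\mu \geq 0$ only helps the lower tail.
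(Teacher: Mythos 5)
Your proof is correct. The paper does not prove this lemma at all---it is quoted as a standard result with a citation to Hoeffding/Azuma---and your argument is precisely the classical one from that literature: Markov's inequality applied to $e^{-\lambda(M_n-M_0)}$, the conditional Hoeffding bound $\EE[e^{-\lambda D_n}\mid \FF_{n-1}]\leq \cosh(\lambda c_n)\leq e^{\lambda^2 c_n^2/2}$ obtained from the chord bound for the convex map $t\mapsto e^{-\lambda t}$ on $[-c_n,c_n]$, iteration via the tower property, and optimization in $\lambda$. You also correctly handle the submartingale case, observing that the conditional drift $\EE[D_n\mid\FF_{n-1}]\geq 0$ only decreases the chord bound (negative slope in the drift), which is exactly why the one-sided lower-tail estimate stated in the lemma survives.
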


For $p<1/2$, we let 
\begin{equation}
    \label{defR}
    R:=\max \left\{ 2, \frac{2p}{(1-2p)\log 2}+1 \right\}.
\end{equation}

The following two lemmas will be used in the proof of Theorem \ref{holderthm}.
\begin{lemma}
\label{Tklarger34lem}
Let $p\in (0,1/2)$ and $x>2$. Let $R$ be as in (\ref{defR}). Recall $(X_n)_{n\in \NN}$ given in (\ref{defXYn}). \\
(i) The process $(M_n)_{n \in \NN}$ defined by $M_0:=0$ and 
 $$
 M_n:=\sum_{j=1}^n \left(\log X_j - \log X_{j-1}- (\frac{1}{2}-p)\log 2\right)\mathds{1}_{\{X_{j-1}\geq R\}}, \quad n\geq 1,
 $$
 is a submartingale such that $|M_n-M_{n-1}|\leq (5\log 2)/2$ for all $n\geq 1$. \\(ii) For $k\geq 1$, let 
 $$T_k:=\#\{j\leq k: X_j\geq R\}$$
 be the total time $X$ has spend in $[R,\infty)$ until time $k$. Then, there exist positive constants $C$ and $\beta$ independent of $x$, $p$ and $k$ such that 
 $$
\PP\left(T_k> \frac{3k}{4}, X_n \leq 0\ \text{for some } n\geq k\right) \leq \frac{C}{(1-2p)^2}e^{-\beta(1-2p)^3k}.
 $$
\end{lemma}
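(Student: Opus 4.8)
\textbf{Part (i)} is a routine computation. Fix $j\ge 1$ and work on the event $\{X_{j-1}\ge R\}$. By (\ref{Xnrecursion}), $\log X_j-\log X_{j-1}$ equals $\log\frac{X_{j-1}-1}{2X_{j-1}}$ with probability $p$ and $\log 2$ with probability $1-p$, so
\[
\EE\bigl[\log X_j-\log X_{j-1}\mid\mathcal F_{j-1}\bigr]=p\log\bigl(1-\tfrac1{X_{j-1}}\bigr)+(1-2p)\log 2 .
\]
Since $-\log(1-t)\le t/(1-t)$ for $t\in(0,1)$, plugging $t=1/X_{j-1}$ gives $-p\log(1-1/X_{j-1})\le \frac p{X_{j-1}-1}\le(\tfrac12-p)\log 2$, the last step precisely by the choice of $R$ in (\ref{defR}). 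Hence $\EE[M_j-M_{j-1}\mid\mathcal F_{j-1}]\ge 0$ on $\{X_{j-1}\ge R\}$ and $=0$ on its complement, so $M$ is a submartingale. For the increments, $X_{j-1}\ge R\ge 2$ forces $\frac{X_{j-1}-1}{2X_{j-1}}\in[\tfrac14,\tfrac12)$, whence $|\log X_j-\log X_{j-1}|\le 2\log 2$ and $|M_j-M_{j-1}|\le 2\log 2+(\tfrac12-p)\log 2\le\tfrac{5\log 2}{2}$.

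For \textbf{part (ii)} I would argue in four steps. \emph{Step 1 (reduce to a tail bound for $S$).} By the Markov property at the deterministic time $k$, together with Lemma \ref{Ynleq2lem} and Lemma \ref{ruinprobexpresslem} applied to the shifted chain $(Y_{k+i})_{i\ge 0}$ (which, given $\mathcal F_k$, is a copy of $Y$ started from $Y_k$), one has $\PP(X_n\le 0\text{ for some }n\ge k\mid\mathcal F_k)=h(X_k)$, where $h(a):=\PP(S>a)$ for $a>0$, $h(a):=1$ for $a\le 0$, and $S$ is as in (\ref{defS}); since $\{T_k>3k/4\}\in\mathcal F_k$, the left‑hand side of the lemma equals $\EE[\mathds 1_{\{T_k>3k/4\}}h(X_k)]$. \emph{Step 2 (on $\{T_k>3k/4\}$, $X_k$ is exponentially large off an exponentially unlikely set).} Put $A_k:=\sum_{j=1}^k(\log X_j-\log X_{j-1})\mathds 1_{\{X_{j-1}\ge R\}}=M_k+N_k(\tfrac12-p)\log 2$ with $N_k:=\#\{1\le j\le k:X_{j-1}\ge R\}$. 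Decomposing $\{i\ge 0:X_i\ge R\}$ into maximal runs and telescoping (\ref{Xnrecursion}) along each, every run \emph{completed} before time $k$ contributes a \emph{negative} amount to $A_k$ (entered with value $\ge R$, left with value $<R$); hence $A_k>0$ forces a run still active at time $k$, and then $\log X_k\ge A_k$. On $\{T_k>3k/4\}$ one has $N_k\ge T_k-1>\tfrac{3k}4-1$, so with $L:=\tfrac{(1-2p)\log 2}{8}k$ we get, on $\{M_k>-L\}\cap\{T_k>3k/4\}$ and for $k$ above an absolute constant, $A_k> -L+(\tfrac{3k}4-1)(\tfrac12-p)\log 2>\tfrac{(1-2p)\log 2}{8}k>0$, so $\log X_k>\tfrac{(1-2p)\log 2}{8}k$, i.e. $X_k> 2^{(1-2p)k/8}$. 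Thus $\{T_k>3k/4,\ X_k\le 2^{(1-2p)k/8}\}\subseteq\{M_k\le -L\}$, and Lemma \ref{Azumaine} applied to $M$ (with $c_j=\tfrac{5\log 2}{2}$) gives $\PP(T_k>3k/4,\ X_k\le 2^{(1-2p)k/8})\le\PP(M_k\le -L)\le e^{-c_1(1-2p)^2k}$ for an absolute $c_1>0$.

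\emph{Step 3 (polynomial tail for $S$).} From the exponential martingale $(e^{\theta S_n})$ with $\theta=\log\frac{1-p}{p}$ one gets $\PP(\max_n S_n\ge m)\le(\tfrac p{1-p})^m$, and the strong Markov property yields the Green's‑function bound $\sum_n\PP(S_n=j)\le\frac1{1-2p}(\tfrac p{1-p})^{\max(j,0)}$. Writing $S=\sum_{j\in\ZZ}2^{\,j}\,\#\{n:S_{n-1}=j,\ \xi_n=1\}$ and isolating the highest level ever visited (so $S=2^{M}V$ with $M=\max_nS_n$, $V=2^{-M}S$, and $\PP(S>a)\le\PP(2^M>\sqrt a)+\PP(V>\sqrt a)$), these estimates give $\PP(S>a)\le\frac{C'}{(1-2p)^2}\,a^{-\gamma}$ with $\gamma=\gamma(p)\ge c'(1-2p)$ for all $a\ge 1$, $C',c'$ absolute. \emph{Step 4 (combine).} Splitting the expectation in Step 1 at $X_k=2^{(1-2p)k/8}$,
\[
\EE\bigl[\mathds 1_{\{T_k>3k/4\}}h(X_k)\bigr]\le \PP\bigl(T_k>\tfrac{3k}4,\ X_k\le 2^{(1-2p)k/8}\bigr)+\PP\bigl(S>2^{(1-2p)k/8}\bigr)\le e^{-c_1(1-2p)^2k}+\frac{C'}{(1-2p)^2}e^{-c_2(1-2p)^2k},
\]
and since $(1-2p)^2\ge(1-2p)^3$ and $(1-2p)^{-2}\ge 1$, the right‑hand side is at most $\frac{C}{(1-2p)^2}e^{-\beta(1-2p)^3k}$ for suitable absolute $C,\beta>0$ once $k$ exceeds an absolute constant; the finitely many smaller $k$ are absorbed by enlarging $C$.

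The step I expect to cost the most work is Step 3: one must check that the tail exponent of $S$ degrades only \emph{linearly} in $1-2p$ and that the prefactor blows up only \emph{polynomially} as $p\uparrow\tfrac12$, which needs care near $p=\tfrac13$, where $\EE S$ passes from finite to infinite (so a plain Markov bound on $\EE S$ no longer suffices and one genuinely uses the "view from the maximum" decomposition). The run‑decomposition bookkeeping in Step 2 is elementary but must be carried out with attention to the boundary terms at time $k$.
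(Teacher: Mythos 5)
Your part (i) is the paper's argument in a thin disguise (the bound $p/(R-1)\le(\tfrac12-p)\log 2$ is exactly how $R$ in (\ref{defR}) is used there, and the increment bound is identical), and your Steps 1–2 of part (ii) reproduce the paper's key upcrossing inequality (\ref{upcross}) — your "completed runs contribute negatively" bookkeeping is precisely that — followed by the same first application of Lemma \ref{Azumaine} as in (\ref{Azuma1}), up to replacing the threshold $2^{(1-2p)k/4}R$ by $2^{(1-2p)k/8}$. The genuine divergence is in the second half: the paper stays with the submartingale $M$ and applies Azuma a second time at the return time $\tau$ below $R$, summing over $n\ge k+\lceil(1-2p)k/8\rceil$ to get (\ref{Azuma2}), which is where the exponent $(1-2p)^3$ comes from; you instead invoke the Markov property at time $k$ (Step 1) and a polynomial tail bound $\PP(S>a)\le C(1-2p)^{-2}a^{-c(1-2p)}$ for the series $S$ of (\ref{defS}). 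If that tail bound holds, your route actually yields the stronger exponent $(1-2p)^2$, which of course implies the stated $(1-2p)^3$ bound, and all constants are uniform in $x$ as required. So the architecture is sound and genuinely different where it matters.

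The one soft spot — which you flagged — is that Step 3 as written is not yet a proof. The claimed tail bound for $S$ is true, but the two estimates you cite do not control $\PP(V>\sqrt a)$ for $V=2^{-M}S$: the Green's-function bound $\sum_n\PP(S_n=j)\le\frac{1}{1-2p}(\tfrac{p}{1-p})^{\max(j,0)}$ applies to fixed levels $j$, whereas $V\le\sum_{i\ge 0}2^{-i}G_{M-i}$ involves visit counts at the random levels $M-i$ (conditioning on the maximum biases them), and a union bound over levels diverges since the walk visits every sufficiently negative level a.s., so $\sup_\ell G_\ell=\infty$ a.s. A clean repair is to truncate at the maximum instead of dividing by it: with $m:=\lceil\log_2 a\rceil$, write $\PP(S>a)\le\PP(M\ge m)+a^{-1}\EE[S\mathds{1}_{\{M<m\}}]$; then $\EE[S\mathds{1}_{\{M<m\}}]\le\sum_{\ell<m}2^{\ell}\EE[G_\ell]\le\frac{1}{1-2p}\bigl(2+m\max(1,(\tfrac{2p}{1-p})^{m})\bigr)$, and since $\PP(M\ge m)\le(\tfrac{p}{1-p})^{m}$ and $\log_2\frac{1-p}{p}\ge 1-2p$, absorbing the factor $m\asymp\log_2 a$ into a slightly smaller power gives exactly $\PP(S>a)\le\frac{C}{(1-2p)^2}a^{-c(1-2p)}$ uniformly in $p\in(0,1/2)$, with no trouble at $p=1/3$ (the point you were worried about: the truncation at level $m$ is what replaces the failing Markov bound on $\EE S$). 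With Step 3 replaced by this argument, your Steps 1, 2 and 4 close the proof as you describe.
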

\begin{proof}
    (i) Using that $\log (1+t)<t$ for $t>0$, we have
$$
\log \left(1-\frac{1}{R}\right)=-\log \left(1+\frac{1}{R-1}\right)> -\frac{1}{R-1} \geq -\frac{1-2p}{2p} \log 2.
$$
Therefore, for all $r\geq R$, 
\begin{equation}
\label{submartR}
   p\log \frac{r-1}{2}+ (1-p)\log 2r - \log r=p\log(1-\frac{1}{r})+(1-2p)\log 2> (\frac{1}{2}-p)\log 2. 
\end{equation} 
which implies that $(M_n)_{n \in \NN}$ is a submartingale in view of (\ref{Xnrecursion}). In addition, (\ref{Xnrecursion}) yields
    \begin{equation}
        \label{increXnbd}
        \log 2 \geq (\log X_n - \log X_{n-1})\mathds{1}_{\{X_{n-1}\geq R\}} \geq -\log 2 + \log (1-\frac{1}{R}) \geq -2\log 2. 
    \end{equation}
This shows that $|M_n-M_{n-1}|\leq (5\log 2)/2$. 

    (ii) We first show that 
   \begin{equation}
       \label{upcross}
        \sum_{j=1}^k (\log X_j - \log X_{j-1})\mathds{1}_{\{X_{j-1}\geq R\}} \leq (\log X_k -\log R)\mathds{1}_{\{X_{k-1}\geq R\}}.  
   \end{equation}
This is trivial if $T_{k-1}=\emptyset$. We assume that $T_{k-1}\neq \emptyset$ and write $T_{k-1}$ as a disjoint union of closed intervals, i.e. $T_{k-1}=\cup_{i=1}^{\ell}[a_i,b_i]$ where $\ell \geq 1$ and $a_1\leq b_1<a_2\leq b_2<\cdots <a_{\ell}\leq b_{\ell}$ are natural numbers. Then, for each $i<\ell$, one has
    $$
 \log X_{b_i+1}-\log X_{b_i} <\log R- \log X_{b_i} \leq  \sum_{j=b_i+1}^{a_{i+1}}\log X_j -\log X_{j-1}.
    $$
    Thus, 
    $$
    \begin{aligned}
      \sum_{j=1}^k (\log X_j - \log X_{j-1})\mathds{1}_{\{X_{j-1}\geq R\}}&=\sum_{i=1}^{\ell}\sum_{j=a_i+1}^{b_i+1} (\log X_j - \log X_{j-1})\\ 
      &\leq \sum_{j=a_i+1}^{b_{\ell}+1} (\log X_j - \log X_{j-1})\leq \log X_{b_{\ell}+1} - \log R,
    \end{aligned}
    $$
    which implies (\ref{upcross}). Note that if $X_{k-1}\geq R$, then $b_{\ell}+1=k$; if $X_{k-1}< R$, then $X_{b_{\ell}+1}<R$.
Using (\ref{upcross}), we obtain
 $$
M_k \leq (\log X_k -\log R)\mathds{1}_{\{X_{k-1}\geq R\}} - \frac{(1-2p)\log 2}{2}(T_k-1).
 $$
Apply Lemma \ref{Azumaine} with $c_j\equiv (5\log 2)/2$ to get
\begin{equation}
    \label{Azuma1}
    \PP\left(T_k> \frac{3k}{4}, X_k \leq  2^{\frac{(1-2p)k}{4}} R\right) \leq \PP\left(M_k\leq -\frac{(1-2p)\log 2}{8}k\right)\leq e^{-\frac{(1-2p)^2}{800}k}.
\end{equation}
Given that $X_k > 2^{(1-2p)k/4}R$, let $\tau:=\inf\{n> k: X_n<R\}$. By (\ref{increXnbd}), we have $\tau-k\geq (1-2p)k/8$. For any $n \geq k+\lceil (1-2p)k/8  \rceil$ where $\lceil \cdot \rceil$ is the usual ceiling function, on the event $\{\tau=n\}$, one has
$$
M_n-M_k = \log X_n -\log X_k - \frac{(1-2p)\log 2}{2}(n-k).
$$
Therefore, again, by Lemma \ref{Azumaine}, 
\begin{equation}
    \label{Azuma2}
    \begin{aligned}
 \PP(\tau<\infty, X_k > 2^{(1-2p)k/4}R) &\leq \sum_{n=k+\lceil (1-2p)k/8  \rceil}^{\infty}\PP\left( M_n-M_k\leq \frac{(1-2p)\log 2}{2}(n-k) \right) \\
 &\leq \sum_{m=\lceil (1-2p)k/8  \rceil}^{\infty} e^{-\frac{(1-2p)^2}{50}m} \leq \frac{1}{1-e^{\frac{-(1-2p)^2}{50}}}e^{-\frac{(1-2p)^3}{400}k}.
\end{aligned}
\end{equation}
Now observe that $\PP(T_k\geq 3k/4, X_n \leq 0\ \text{for some } n\geq k)$ is at most
$$
 \PP(T_k> 3k/4, X_k \leq  2^{(1-2p)k/4} R)+ \PP(X_k >  2^{(1-2p)k/4}R, X_n < R\ \text{for some } n\geq k),
$$
and that
$$
\frac{1}{1-e^{\frac{-(1-2p)^2}{50}}} \sim \frac{50}{(1-2p)^2}, \quad \text{as } p\to \frac{1}{2}-.
$$
Then (ii) follows from (\ref{Azuma1}) and (\ref{Azuma2}).
\end{proof}

\begin{lemma}
\label{Tkles34lem}
In the setting of Lemma \ref{Tklarger34lem}, there exist positive constants $C$ and $\beta$ independent of $x$, $p$ and $k$ such that 
    $$
    \PP\left(X_k>0,T_k\leq \frac{3k}{4}\right) \leq C\left(2^{-\beta p^2 k} + 2^{\frac{\beta(1-2p)}{\log_2(1-2p)} k}\right).
    $$
\end{lemma}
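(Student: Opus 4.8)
The plan is to reduce the event $\{X_k>0,\ T_k\le 3k/4\}$ to a dwell-time statement for the band $(0,R)$ and then run Hoeffding--Azuma arguments in suitable coordinates, in the spirit of Lemma~\ref{Tklarger34lem}. First, since a nonpositive $X$ stays nonpositive (by (\ref{Xnrecursion})), $\{X_k>0\}$ forces $X_j>0$ for every $j\le k$; hence on our event at least $\lceil k/4\rceil$ of the indices $j\in\{1,\dots,k\}$ satisfy $X_j\in(0,R)$. I would split these ``low'' times according to whether $X_j\in[1,R)$ or $X_j\in(0,1)$; at least $\lceil k/8\rceil$ fall into one of the two ranges, and the two terms of the asserted bound will come from these two cases.

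Case $X_j\in[1,R)$ for at least $\lceil k/8\rceil$ low times. Writing $\log_2 X_j=\log_2(t-a_j)-S_j$ with $t=x-2$, the increments of $\log_2 X_j$ on steps from a state in $[1,R)$ are bounded by $O(\log_2 R)$, so I would form a (sub)martingale $\widetilde M_j=\sum_{i\le j}\big(\log_2 X_i-\log_2 X_{i-1}-\mu\big)\mathds{1}_{\{X_{i-1}\in[1,R)\}}$ and note that confinement of $X$ to the bounded band $[1,R)$ for $\ge k/8$ steps forces $|\widetilde M_k|$ to be of order $k$; Lemma~\ref{Azumaine} with $c_j=O(\log_2 R)$ then yields a bound $2^{-ck}$ whose $p$-dependence is $c\asymp (1-2p)/\log_2(1-2p)$, the $\log_2(1-2p)$ entering through the band width $\log_2 R$ and the fact that $R\asymp((1-2p)\log 2)^{-1}$ by (\ref{defR}). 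Case $X_j\in(0,1)$ for at least $\lceil k/8\rceil$ low times. Here survival forces the step after each such $j$ to be a loss (a win gives $X_{j+1}=(X_j-1)/2<0$), so the visits to $(0,1)$ decompose into maximal runs, each entered by a win from $[1,2)$ and otherwise consisting of forced losses, a run entered at $X=(r-1)/2$ having length $\asymp -\log_2(r-1)$. Using this together with (\ref{defXn}), one accumulates $\ge k/8$ such steps only if either there are many runs, hence an atypically large number of wins $N_k=\#\{i\le k:\xi_i=1\}$ (short runs cost little time), or there is a long run, which requires $X$ to come extremely close to $1$, an atypical configuration of the walk $S$; a Chernoff/Azuma estimate for $N_k$, in a deliberately lossy form with rate $D(\cdot\Vert p)\gtrsim p^2$, then produces the $2^{-\beta p^2 k}$ term.

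The main obstacle is precisely the unboundedness of the increment of $\log X$ on a win from $X_{j-1}$ near $1$: unlike the region $X\ge R$ treated in Lemma~\ref{Tklarger34lem}, no naive functional of $X$ has bounded increments near the boundary $X=1$, so Azuma cannot be applied blindly to the $[1,R)$-restricted process. The remedy is the excursion picture above — each dip into $(0,1)$ is a run of forced losses that pushes $\log X$ back up linearly — so that one either ``skips'' these excursions (optional sampling for the process observed only at times when $X\ge 1$) or charges their length to the win count, turning the $[1,R)$-functional into one with $O(\log_2 R)$ increments. Carrying out this book-keeping while tracking the exact $p$-dependence, so that the two rates come out as $p^2$ and $(1-2p)/\log_2(1-2p)$, is the technical heart; I expect the two-stage Azuma scheme of Lemma~\ref{Tklarger34lem}(ii) — first control the position at time $k$, then control what happens afterwards — to serve as a useful template.
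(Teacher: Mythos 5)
There is a genuine gap, and it is concentrated in your Case~1. Your plan is to run an Azuma/confinement argument for $\log_2 X$ restricted to steps started in $[1,R)$: confinement for $\gtrsim k$ steps forces the centered restricted sum to deviate linearly, provided the drift of $\log_2 X$ inside the band is positive and bounded below. But the drift is \emph{not} positive there: by (\ref{submartR}) and the definition (\ref{defR}), $R$ is chosen precisely as the threshold above which $p\log\frac{r-1}{2}+(1-p)\log 2r-\log r$ exceeds $(\tfrac12-p)\log 2$; below $R$ this quantity is negative near $r=1$ (unboundedly so), and for $p>1/3$ it is negative on most of $[2,R)$ as well (it is at most $(1-3p)\log 2$ there). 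Your excursion-skipping remedy does bound the increments, but it does not restore a positive drift: from $r\in[1,3)$ the skipped step has drift at least roughly $(1-p)-p\log_2 3$, which is negative for $p$ close to $1/2$. So ``confinement forces $|\widetilde M_k|$ of order $k$'' has no justification, and indeed for $p$ near $1/2$ the confinement event by itself is not unlikely at any rate bounded away from $0$ — the true reason the event in the lemma is rare is not a drift effect but the \emph{accumulated ruin risk}: every visit to $(0,R)$ gives a chance $p^{\lceil\log_2 R\rceil+1}$ of hitting $0$ within the next $\lceil\log_2 R\rceil+1$ steps (consecutive wins halve-and-shift $X$ below $0$), so surviving $\asymp k/\log_2 R$ well-separated visits costs $(1-p^{\lceil\log_2 R\rceil+1})^{\asymp k/\log_2 R}$, which is exactly how the paper proves the lemma (with stopping times $\theta_n$ spaced $\lceil\log_2 R\rceil+1$ apart), and whose two asymptotic regimes $p\to0$ and $p\to\tfrac12$ produce the $p^2$ and $(1-2p)/\log_2(1-2p)$ exponents simultaneously. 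Note also that even if a drift $\mu\asymp(1-2p)$ were available, Azuma with increments $\asymp\log_2 R\asymp-\log_2(1-2p)$ would give a rate $\asymp(1-2p)^2/\log_2^2(1-2p)$, which is strictly weaker than the $(1-2p)/|\log_2(1-2p)|$ rate the statement requires.

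Your Case~2 also has an unsubstantiated step: for the ``one long run'' alternative you appeal to the entry point having to be ``extremely close to $1$, an atypical configuration of the walk $S$,'' but no estimate is offered, and none is easy — bounding the probability that $X$ enters a tiny neighborhood of $1$ is a small-ball estimate of essentially the same nature as the H\"older continuity these lemmas are used to prove, and it is not controlled by large deviations of $S$ (for small $p$ a long stretch of losses is typical, not atypical). The correct and simple mechanism, which you identify but do not use, is that a run in $(0,1)$ consists of forced losses: surviving a run of length $\ell$ costs $(1-p)^{\ell}$, so a union bound over starting times kills the long-run case at rate at least $\asymp p$. If you repair Case~2 this way and replace the Case~1 drift argument by the per-visit ruin-chance argument above, you recover the paper's proof; as written, the proposal does not establish the lemma, in particular for $p$ near $1/2$.
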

\begin{proof}
   We define inductively a sequence of stopping times $(\theta_n)_{n\geq  1}$: let $\theta_1:=\inf\{m\in \NN: 0<X_m<R\}$, and for any $n> 1$, let
   $$
   \theta_n:=\inf\{m\geq \theta_{n-1}+\lceil \log_2R \rceil+1: 0<X_m<R\},
   $$
   with the convention that $\inf \emptyset=\infty$. On the event $\{X_k>0, T_k\leq 3k/4\}$, one has
  $$
\{j\leq k:X_j<R\} \subset \bigcup_{i=1}^{n_k}[\theta_i,\theta_i+\lceil \log_2R \rceil],
  $$
  where $n_k:=\max\{n: \theta_n\leq k\}$. In particular, 
  $$n_k\geq \frac{k+1-3k/4}{ \lceil \log_2R \rceil+1}=  \frac{k+4}{ 4\lceil \log_2R \rceil+4}.  $$ 
 For any $j\in \NN$, given that $X_j \in (0,R)$, by (\ref{Xnrecursion}) (or (\ref{surviveXn}) and imagine that the gambler starts at time $j$), 
 $$X_{j+\lceil \log_2R \rceil+1}\leq 0 \quad \text{on}\ \bigcap_{i=j+1}^{j+\lceil \log_2R \rceil+1}\{\xi_i=1\}.$$
Therefore, for any $n> 1$,
 $$
\PP(\theta_n<\infty| \FF_{\theta_{n-1}})\mathds{1}_{\{ \theta_{n-1}<\infty\}} \leq (1-p^{\lceil \log_2R \rceil+1})\mathds{1}_{\{ \theta_{n-1}<\infty\}},
 $$
 which implies that
 \begin{equation}
     \label{Xk0Tk34bd}
     \begin{aligned}
       \PP\left(X_k>0,T_k\leq \frac{3k}{4}\right)\leq \PP(\theta_{\lceil \frac{k+4}{ 4\lceil \log_2R \rceil+4}  \rceil }\leq k) &\leq (1-p^{\lceil \log_2R \rceil+1})^{\lceil \frac{k+4}{ 4\lceil \log_2R \rceil+4}  \rceil-1}   \\
        &\leq 2 \cdot 2^{-\frac{k}{ 4\lceil \log_2R \rceil+4} \cdot \log_{1/2}(1-p^{\lceil \log_2R \rceil+1})}
     \end{aligned}
 \end{equation}
 where we used that $1-p^{\lceil \log_2R \rceil+1}>1/2$. We now consider the asymptotic behavior of the last term in (\ref{Xk0Tk34bd}) as $p\to 0$ or $p\to 1/2$. 
 \begin{itemize}
     \item  For small $p$, say $p<1/3$, $R$ defined by (\ref{defR}) equals 2. Also, observe that 
 $$
\log_{1/2} (1-p^{2}) \sim \frac{p^2}{\log 2}, \quad \text{as}\ p\to 0.
 $$
 \item As $p\to 1/2$, one has $\log_2R  \sim -\log_2 (1-2p)$ and 
 $$
 \log_{1/2}(1-p^{\lceil \log_2R \rceil+1}) \sim \frac{p^{\lceil \log_2R \rceil+1}}{\log 2} \geq \frac{p^2}{\log 2} p^{\log_2 R}= \frac{p^2}{\log 2} R^{\log_2 p} \sim \frac{1-2p}{4}.
 $$
 \end{itemize}
 Then Lemma \ref{Tkles34lem} follows from (\ref{Xk0Tk34bd}).
\end{proof}

We now finish the proof of Theorem \ref{holderthm}.
\begin{proof}[Proof of Theorem \ref{holderthm}]
We first prove (\ref{exponentat2}). For any $x\in (2,3)$, one can find an integer $k\geq 1$ such that $x-2\in [2^{-k},2^{-k+1})$. Then, by Theorem \ref{f1conditions} and (\ref{defS}),
 $$
\PP(S\leq 2^{-k}| \bigcap_{i=1}^{k}\{\xi_{i}=-1\}) = \PP(\sum_{n=k+1}^{\infty}\mathds{1}_{\{\xi_n=1\}}2^{S_{n-1}-S_k}\leq 1) = \PP(S\leq 1)=1-f(3,p)>0.
 $$
Thus, by Lemma \ref{ruinprobexpresslem},
\begin{equation}
    \label{lwbdf2}
f(2,p)-f(x,p) =\PP(S\leq x-2) \geq \PP(S \leq 2^{-k}) \geq (1-f(3,p))(1-p)^{k}.
\end{equation}
On the other hand, we see from (\ref{defS}) that $S>2^{-k+1}$ on $\bigcup_{i=1}^{k-1}\{\xi_i=1\}$, and in particular,
\begin{equation}
    \label{upbdf2}
    f(2,p)-f(x,p)\leq \PP(S \leq 2^{-k+1}) \leq (1-p)^{k-1}. 
\end{equation}
Now observe that
$$
(1-p)^{k} \leq (x-2)^{\log_{1/2}(1-p)} \leq (1-p)^{k-1}.
$$
Then (\ref{exponentat2}) follows from (\ref{lwbdf2}) and (\ref{upbdf2}).

Now assume that $p\in (0,1/2)$ and $x\geq 2,h>0$. Choose a positive integer $K$ such that $h>2^{-K+1}$. Then, there exist $a_0\in \NN$ and $(a_1,a_2\dots,a_K)\in \{0,1\}^K$ such that 
$$
x \leq \sum_{n=0}^K a_n2^{-n} < \frac{1}{2^K}+ \sum_{n=0}^K a_n2^{-n} \leq x+h.
$$
We will show in the proof of Theorem \ref{derivativex0} (i) (see (\ref{SsumAi})) that 
  $$S=\sum_{n=0}^{\infty}A_n2^{-n}$$
where $(A_n)_{n\in \NN}$ are $\NN$-valued i.i.d. random variables such that $\PP(A_0=i)>0$ for all $i\in \NN$. Thus, 
$$
\begin{aligned}
    \PP(x-2 <S \leq x-2+h) &\geq \PP\left(S \in \left(\sum_{n=0}^K a_n2^{-n},\frac{1}{2^K}+\sum_{n=0}^K a_n2^{-n} \right]\right) \\
    &\geq \PP\left(\bigcap_{n=0}^K \{A_n=a_n\}\right)\PP\left(0<\sum_{n=K+1}^{\infty}A_n2^{-n}\leq \frac{1}{2^K}\right) \\
    &=(1-f(4,p))\prod_{n=0}^K \PP(A_n=a_n)  >0,
\end{aligned}
$$
where we used that $\sum_{n=K+1}^{\infty}A_n2^{-n+K+1}\stackrel{d}{=} S$ and Lemma \ref{ruinprobexpresslem} in the third line. This proves the first inequality in (\ref{holderine}).

To prove the second inequality in (\ref{holderine}), it suffices to show that for any $x\geq 2$ and $k\geq 1$, one has
\begin{equation}
    \label{xyS2kHolderine}
   \PP(x-2< S\leq x+4^{-k}-2)\leq \frac{C}{(1-2p)^2}\left(2^{-2\beta (1-2p)^3 k}+ 2^{-2\beta p^2 k}\right). 
\end{equation}
Indeed, (\ref{xyS2kHolderine}) implies that if $4^{-k-1}<h\leq 4^{-k}$ for some $k \geq 1$, then 
$$
\begin{aligned}
    \PP(x-2< S\leq x+h-2)&\leq \PP(x-2< S\leq x+4^{-k}-2) \stackrel{(\ref{xyS2kHolderine})}{\leq} \frac{C}{(1-2p)^2}\left(4^{-\beta (1-2p)^3 k}+4^{-\beta p^2 k}\right) \\
    &\leq \frac{C}{(1-2p)^2}\left(4^{\beta (1-2p)^3}h^{\beta (1-2p)^3}+ 4^{\beta p^2}h^{\beta p^2}\right),
\end{aligned}
$$
which yields the desired result with a larger constant $C$. Note that if $h\geq 1/4$, then any positive constants $C\geq 4$, $\beta\leq 1$ would satisfy 
$$
f(x,p)-f(x+h,p)=\PP(x-2< S\leq x+h-2) \leq 1 \leq \frac{C}{4^{\beta}} \leq Ch^{\beta}.
$$

The proof of (\ref{xyS2kHolderine}) when $x=2$ was given in (\ref{upbdf2}) (note that $\log_{1/2}(1-p)\sim p/\log 2$ as $p\to 0$). We now assume that $x>2$ and denote the event on the left-hand side of (\ref{xyS2kHolderine}) by $E(x,k)$. Recall we have shown in (\ref{surviveXn}) that $X_k\leq 0$ if and only if $\sum_{n=1}^{k}\mathds{1}_{\{\xi_n=1\}}2^{S_{n-1}}\geq x-2$. Therefore, 
\begin{equation}
    \label{Xkleq0probbd}
   \begin{aligned}
  \PP(X_k\leq 0, E_{x,k} ) \leq \PP\left(\sum_{n=k+1}^{2k}\mathds{1}_{\{\xi_n=1\}}2^{S_{n-1}}\leq 4^{-k} \right) \leq (1-p)^{k}=2^{-k\log_{1/2}(1-p)}.
\end{aligned}  
\end{equation}
From the proof of Lemma \ref{ruinprobexpresslem}, we see that $X_n\leq 0$ for some $n$ if $S>x-2$. By Lemma \ref{Tklarger34lem} and Lemma \ref{Tkles34lem}, there exist positive constants $C$ and $\beta$ independent of $x$, $p$ and $k$ such that
$$
\begin{aligned}
  \PP(X_k>0, E_{x,k}) &\leq \PP(T_k> 3k/4, X_n \leq 0\ \text{for some } n\geq k) + \PP(X_k>0,T_k\leq 3k/4)  \\
  &\leq \frac{C}{(1-2p)^2}2^{-\beta(1-2p)^3k} + C\left(2^{-\beta p^2 k} + 2^{\frac{\beta(1-2p)}{\log_2(1-2p)} k}\right). 
\end{aligned}
$$
Combined with (\ref{Xkleq0probbd}), this proves (\ref{xyS2kHolderine}) with possibly a larger $C$ and a smaller $\beta$ where we used that  
$$\log_{1/2}(1-p)\sim \frac{p}{\log 2} \quad \text{as } p\to 0+,\ \text{and}\ (1-2p)^3 =o\left(\frac{1-2p}{-\log_2(1-2p)}\right)\quad \text{as } p\to \frac{1}{2}-.$$
\end{proof}

For any $j, k \in \mathbb{N}$, we define
$$
D_{j,k}: [0,\infty) \mapsto \{0,1,2,\cdots, 2^k-1\}, \quad D_{j,k}(x) =\left\lfloor 2^{j+k} x\right\rfloor \bmod 2^k,
$$
where $\lfloor \cdot \rfloor$ is the usual floor function. The following result is a direct consequence of the Birkhoff's ergodic theorem or (the Borel normal theorem), see e.g. \cite[Section 4.2.2]{MR3558990}. 
\begin{lemma}
\label{lemBorelnormal}
Let $k$ be an positive integer and assume that $\ell \in \{0,1,2,\cdots, 2^k-1\}$. Then, for almost every $x\in [0,\infty)$,
\begin{equation}
    \label{Djk2kergodic}
    \lim_{n\to \infty}\frac{\sum_{j=0}^{n-1} \mathds{1}_{\{D_{j,k}(x)=\ell \}}}{n} = \frac{1}{2^k}.
\end{equation}
\end{lemma}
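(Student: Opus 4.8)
The plan is to deduce the statement from the ergodicity of the doubling map together with Birkhoff's pointwise ergodic theorem; indeed, $D_{j,k}$ reads off the $k$ binary digits of its argument in positions $j+1,\dots,j+k$, so the assertion is precisely a form of Borel's normal number theorem for overlapping blocks.

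First I would reduce to the unit interval. Writing $x=\lfloor x\rfloor+\{x\}$, one has $2^{j+k}x=2^{j+k}\lfloor x\rfloor+2^{j+k}\{x\}$, and since $2^{j+k}\lfloor x\rfloor$ is a multiple of $2^k$, it follows that $D_{j,k}(x)=\lfloor 2^{j+k}\{x\}\rfloor \bmod 2^k=D_{j,k}(\{x\})$ for every $j$. As $[0,\infty)=\bigsqcup_{m\in\NN}[m,m+1)$ and $x\mapsto\{x\}=x-m$ is a translation on each $[m,m+1)$ (hence preserves Lebesgue-null sets), it therefore suffices to prove (\ref{Djk2kergodic}) for Lebesgue-almost every $x\in[0,1)$. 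On $[0,1)$ consider the doubling map $T(x)=2x\bmod 1$, which preserves Lebesgue measure $\lambda$ and is ergodic (see e.g. \cite[Section 4.2.2]{MR3558990}). One checks by induction that $T^jx=\{2^jx\}$, whence
$$
D_{0,k}(T^jx)=\big\lfloor 2^k\{2^jx\}\big\rfloor=\big\lfloor 2^{j+k}x\big\rfloor-2^k\big\lfloor 2^jx\big\rfloor\equiv \big\lfloor 2^{j+k}x\big\rfloor \pmod{2^k};
$$
since the left-hand side lies in $\{0,1,\dots,2^k-1\}$, this gives the key identity $D_{j,k}(x)=D_{0,k}(T^jx)$ for all $x\in[0,1)$ and all $j\geq 0$. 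Consequently $\mathds{1}_{\{D_{j,k}(x)=\ell\}}=\mathds{1}_{I_\ell}(T^jx)$, where $I_\ell:=\{y\in[0,1):D_{0,k}(y)=\ell\}=[\ell 2^{-k},(\ell+1)2^{-k})$ satisfies $\lambda(I_\ell)=2^{-k}$.

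Finally, applying Birkhoff's pointwise ergodic theorem to the ergodic system $([0,1),\lambda,T)$ and the integrable function $\mathds{1}_{I_\ell}$ yields, for $\lambda$-almost every $x\in[0,1)$,
$$
\frac1n\sum_{j=0}^{n-1}\mathds{1}_{\{D_{j,k}(x)=\ell\}}=\frac1n\sum_{j=0}^{n-1}\mathds{1}_{I_\ell}(T^jx)\;\longrightarrow\;\int_0^1\mathds{1}_{I_\ell}\,d\lambda=\frac1{2^k}\qquad(n\to\infty),
$$
which is exactly (\ref{Djk2kergodic}); the reduction of the previous paragraph then upgrades this to almost every $x\in[0,\infty)$. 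There is no genuine difficulty in this argument: the only steps requiring a little care are the bookkeeping with $\lfloor\cdot\rfloor$ and $\bmod$ in the identity $D_{j,k}(x)=D_{0,k}(T^jx)$, and recalling (or citing) that the doubling map is ergodic with respect to Lebesgue measure.
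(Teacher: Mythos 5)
Your proof is correct and follows exactly the route the paper intends: the paper states this lemma as a direct consequence of Birkhoff's ergodic theorem (equivalently, the Borel normal number theorem) and simply cites a reference, while you supply the standard details (reduction to $[0,1)$, the identity $D_{j,k}(x)=D_{0,k}(T^jx)$ for the doubling map $T$, and Birkhoff applied to the indicator of the dyadic interval of length $2^{-k}$). No discrepancies to report.
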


To prove Theorem \ref{derivativex0} (i), we shall also need the following auxiliary lemma.

\begin{lemma}
\label{lemDjkI}
Let $(\zeta_n)_{n\in \NN}$ be $\NN$-valued i.i.d. random variables such that $\PP(\zeta_0=i)>0$ for all $i\in \NN$. Assume that $\widehat{S}:=\sum_{n=0}^{\infty}\zeta_n2^{-n}$ converges a.s.. For any positive integer $k$ and $\ell \in \{0,1,2,\cdots, 2^k-1\}$, one has, almost surely,
\begin{equation}
    \label{Djk2kI}
    \lim_{n\to \infty}\frac{\sum_{j=0}^{n-1} \mathds{1}_{\{D_{j,k}(\widehat{S})=\ell \}}}{n} = \PP(D_{0,k}(\widehat{S})=\ell).
\end{equation}
\end{lemma}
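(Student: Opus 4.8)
The plan is to realize the digit blocks $D_{j,k}(\widehat{S})$ as the successive values of one fixed measurable function along an orbit of the Bernoulli shift generated by $(\zeta_n)$, and then to invoke Birkhoff's pointwise ergodic theorem, exactly in the spirit of Lemma \ref{lemBorelnormal}. Concretely, realize $\omega=(\zeta_0,\zeta_1,\dots)$ as a point of $\NN^{\NN}$ equipped with the product measure $\nu^{\otimes\NN}$, where $\nu$ is the common law of $\zeta_0$, and let $\sigma$ be the left shift, which is measure preserving and ergodic (in fact mixing). I want an a.s.\ defined measurable $G\colon\NN^{\NN}\to\{0,1,\dots,2^k-1\}$ with $D_{j,k}(\widehat{S})=G(\sigma^j\omega)$ for every $j$; granting this, applying the ergodic theorem to the bounded function $\mathds{1}_{\{G=\ell\}}$ gives, almost surely,
\[
\frac1n\sum_{j=0}^{n-1}\mathds{1}_{\{D_{j,k}(\widehat{S})=\ell\}}\;=\;\frac1n\sum_{j=0}^{n-1}\mathds{1}_{\{G=\ell\}}\circ\sigma^j\;\longrightarrow\;\int_{\NN^{\NN}}\mathds{1}_{\{G=\ell\}}\,d\nu^{\otimes\NN}\;=\;\PP\big(D_{0,k}(\widehat{S})=\ell\big),
\]
the last equality because $G(\omega)=D_{0,k}(\widehat{S})$, and the limit being a constant because $\sigma$ is ergodic.

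The identification comes from splitting $2^{j+k}\widehat{S}=M_j+N_j+R_{j+k}$, where $M_j:=\sum_{n=0}^{j-1}\zeta_n2^{j+k-n}$, $N_j:=\sum_{n=j}^{j+k}\zeta_n2^{j+k-n}=\sum_{i=0}^k\zeta_{j+i}2^{k-i}$, and $R_{j+k}:=\sum_{m\ge1}\zeta_{j+k+m}2^{-m}$. First, $M_j$ is a non-negative integer divisible by $2^k$ and $N_j$ is a non-negative integer; second, the hypothesis that $\widehat{S}$ converges a.s.\ forces $R_{j+k}=2^{j+k}\sum_{n\ge j+k+1}\zeta_n2^{-n}$ to be an a.s.\ finite non-negative real. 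Using $\lfloor m+t\rfloor=m+\lfloor t\rfloor$ for $m\in\ZZ,\ t\in\RR$, one gets $\lfloor 2^{j+k}\widehat{S}\rfloor=M_j+N_j+\lfloor R_{j+k}\rfloor$, and reducing modulo $2^k$ kills $M_j$, so
\[
D_{j,k}(\widehat{S})=\Big(\sum_{i=0}^k\zeta_{j+i}2^{k-i}+\Big\lfloor\sum_{m\ge1}\zeta_{j+k+m}2^{-m}\Big\rfloor\Big)\bmod 2^k=G(\sigma^j\omega),
\]
where $G(\zeta'_0,\zeta'_1,\dots):=\big(\sum_{i=0}^k\zeta'_i2^{k-i}+\lfloor\sum_{m\ge1}\zeta'_{k+m}2^{-m}\rfloor\big)\bmod 2^k$, which is well defined $\nu^{\otimes\NN}$-a.s.\ by the same tail argument (the series $\sum_n\zeta'_n2^{-n}$ has the law of $\widehat{S}$, hence converges a.s.).

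I expect the only delicate point to be this last bookkeeping step: one must verify that the high-order terms $M_j$ form a multiple of $2^k$ and so disappear in the reduction, that the floor may be pulled through the integer part $M_j+N_j$, and — the place where the standing hypothesis is really used — that $R_{j+k}$ is a.s.\ finite, so that $G$ is a genuine (a.s.-defined) function of the shift orbit rather than only an approximation of $D_{j,k}(\widehat{S})$ modulo finitely many coordinates. Once the exact identity $D_{j,k}(\widehat{S})=G(\sigma^j\omega)$ is in hand, the conclusion is an immediate application of the pointwise ergodic theorem for the Bernoulli shift.
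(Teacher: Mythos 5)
Your proof is correct, but it takes a genuinely different route from the paper. You identify the same key algebraic fact — writing $2^{j+k}\widehat{S}=M_j+N_j+R_{j+k}$ with $M_j\equiv 0 \pmod {2^k}$ and $M_j+N_j\in\ZZ$, so that $D_{j,k}(\widehat{S})$ depends only on the shifted sequence $(\zeta_j,\zeta_{j+1},\dots)$; in the paper's notation your $N_j+\lfloor R_{j+k}\rfloor$ is exactly $Z_j=\bigl\lfloor 2^k\sum_{n\ge 0}\zeta_{n+j}2^{-n}\bigr\rfloor$. From there the two arguments diverge: the paper treats $(Z_j)_{j\in\NN}$ as a stochastic process in its own right, proves it is an irreducible, aperiodic Markov chain on $\NN$ whose marginal law $\pi$ is stationary (this is where the hypothesis $\PP(\zeta_0=i)>0$ for all $i$ is used, to get $\pi(i)>0$ and irreducibility), and then invokes the ergodic theorem for Markov chains; you instead view $D_{j,k}(\widehat{S})=G\circ\sigma^j$ as a fixed bounded measurable function evaluated along the orbit of the Bernoulli shift on $\NN^{\NN}$ and apply Birkhoff's pointwise ergodic theorem directly, in the same spirit as Lemma \ref{lemBorelnormal}. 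Your route is shorter, avoids the Markov-chain bookkeeping (irreducibility, aperiodicity, stationarity), and is strictly more general: it never uses the full-support assumption $\PP(\zeta_0=i)>0$, so the conclusion holds for any i.i.d.\ $\NN$-valued $(\zeta_n)$ with $\sum_n\zeta_n2^{-n}<\infty$ a.s. The only points needing care — that the tail sums are a.s.\ finite so $G$ is a.s.\ defined, and that the countably many a.s.\ identities $D_{j,k}(\widehat{S})=G(\sigma^j\omega)$ hold on a common full-measure set — are exactly the ones you flag, and they are handled correctly.
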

\begin{proof}
 Fix $k\geq 1$ and $\ell \in \{0,1,2,\cdots, 2^k-1\}$.  For $j\in \NN$, we let $Z_j:=\left\lfloor 2^k\sum_{n=0}^{\infty}\zeta_{n+j}2^{-n}\right\rfloor$ and write $\pi(i):=\PP(Z_0=i)$ for $i\in \NN$. We note that by definition, 
$$D_{j,k}(\widehat{S})\quad =\quad \left\lfloor 2^k \sum_{n=0}^{j-1}\zeta_{n}2^{j-n} +2^{k} \sum_{n=0}^{\infty}\zeta_{n+j}2^{-n} \right\rfloor \bmod 2^k \quad =\quad Z_j \bmod 2^k. $$
We shall show that $(Z_j)_{j\in \NN}$ is an irreducible aperiodic Markov chain on $\NN$ with stationary distribution $(\pi(i))_{i\in \NN}$, which, by the ergodic theorem for Markov chains, see e.g. \cite[Theorem 1.10.2]{MR1600720} (with $f(i)=\mathds{1}_{\{i \bmod 2^k = \ell \}}$ in the notation there), would imply (\ref{Djk2kI}): Almost surely,
$$    
\lim_{n\to \infty}\frac{\sum_{j=0}^{n-1} \mathds{1}_{\{D_{j,k}(\widehat{S})=\ell \}}}{n} = \lim_{n\to \infty}\frac{\sum_{j=0}^{n-1} \mathds{1}_{\{Z_j \bmod 2^k =\ell \}}}{n} =\PP(Z_0 \bmod 2^k=\ell)=\PP(D_{0,k}(\widehat{S})=\ell).$$

We first show that $\pi(i)>0$ for all $i\in \NN$. Since $\sum_{n=0}^{\infty}\zeta_n2^{-n}$ converges almost surely, there exists $J \in \NN$ such that $\pi(J)>0$. Now observe that for any $j\in \NN$, one has $Z_j\stackrel{d}{=} Z_0$ and 
    \begin{equation}
        \label{ZiZiplus1}
        Z_j=\left\lfloor 2^k\zeta_j +2^{k-1}\sum_{n=0}^{\infty}\zeta_{n+j+1}2^{-n}\right\rfloor=2^k \zeta_j + \left\lfloor \frac{Z_{j+1}}{2}\right\rfloor,
    \end{equation}
where we used that if $2i\leq 2^{k}\sum_{n=0}^{\infty}\zeta_{n+j+1}2^{-n} <2i+2$ for some $i\in \NN$, then 
$$
\left\lfloor 2^{k-1}\sum_{n=0}^{\infty}\zeta_{n+j+1}2^{-n}\right\rfloor = i= \left\lfloor \frac{1}{2} \left \lfloor 2^{k}\sum_{n=0}^{\infty}\zeta_{n+j+1}2^{-n} \right\rfloor\right\rfloor.
$$
Note also that $\zeta_j$ is independent of $(Z_{n})_{n\geq j+1}$. Thus, by (\ref{ZiZiplus1}), 
$$\pi\left(\lfloor \frac{J}{2} \rfloor\right)= \PP\left(Z_j=\lfloor \frac{J}{2} \rfloor\right)\geq \PP(Z_{j+1}=J, \zeta_j=0)= \pi(J)\PP(\zeta_0=0)>0.$$ 
Repeating this argument, we see that $\pi(J),\pi(\lfloor J/2 \rfloor),\dots,\pi(0)>0$. Therefore, by using (\ref{ZiZiplus1}) again, one has, for any $i_k \in \NN$,  
$$\pi(i_k 2^k)= \PP(Z_j=i_k 2^k)\geq \PP(Z_{j+1}=0, \zeta_j=i_k)= \pi(0)\PP(\zeta_0=i_k)> 0,$$ 
and for any $i_k,i_{k-1}\in \NN$,
$$
\pi(i_k2^k+i_{k-1}2^{k-1})\geq \PP(Z_{j+1}=i_{k-1}2^{k}, \zeta_j=i_k)= \pi(i_k 2^k)\PP(\zeta_0=i_k)>0.
$$
We can repeat this argument to conclude that $\pi(i)>0$ for all $i \in \NN$ since we can write $i=i_k2^k+i_{k-1}2^{k-1}+\cdots+i_0$ for some $i_k,i_{k-1},\cdots,i_0\in \NN$.

For $x,y\in \NN$, we let 
$$p(x,y):=\PP\left(2^k\xi_0=y-\lfloor \frac{x}{2}\rfloor\right), \quad P(x,y):=\frac{p(y,x)\pi(y)}{\pi(x)}.$$
For any $m\in \NN$ and $(z_0,z_1,z_2,\cdots,z_m,z_{m+1}) \in \NN^{m+1}$, by (\ref{ZiZiplus1}),
   \begin{equation}
       \label{markovequ1}
      \PP(Z_j=z_j, 0\leq j \leq m+1) = \pi(z_{m+1})\prod_{t=0}^m p(z_{m+1-t},z_{m-t}). 
   \end{equation}
If the left-hand side of (\ref{markovequ1}) is positive, then (\ref{markovequ1}) shows that (note that (\ref{markovequ1}) also holds if one replaces $m+1$ by $m$)
$$
\PP(Z_{m+1}=z_{m+1}\mid Z_j=z_j, 0\leq j \leq m) = \frac{\pi(z_{m+1})}{\pi(z_{m})}p(z_{m+1},z_{m})=P(z_{m},z_{m+1}).
$$
This implies that $(Z_j)_{j\in \NN}$ is a Markov chain with initial distribution $(\pi(i))_{i\in \NN}$ and transition probabilities $(P(x,y))_{x,y\in \NN}$. Notice that in the proof of that $\pi(i)>0$ for all $i\in \NN$, we have shown that each $i\in \NN$ commutes with the site $0$, and in particular, the Markov chain $(Z_j)_{j\in \NN}$ is irreducible. It is aperiodic since $P(0,0)\geq \PP(\zeta_0=0)\pi(0)>0$. Moreover, for any $y\in \NN$, one has
$$
\sum_{x\in \NN}\pi(x)P(x,y)=\sum_{x\in \NN} \pi(y) p(y,x)=\pi(y),
$$
which implies that $(\pi(i))_{i\in \NN}$ is a stationary distribution for $(Z_j)_{j\in \NN}$.
\end{proof}

Now we give the proof of Theorem \ref{derivativex0} (i).

\begin{proof}[Proof of Theorem \ref{derivativex0} (i)]
That $f(\cdot,p)$ is continuous was already proved in Theorem \ref{holderthm}. To prove that $\partial_x f(x,p)=0$ for almost every $x\in [1,\infty)$, by Lemma \ref{ruinprobexpresslem} and \cite[Proposition 3.30]{MR1681462}, it is equivalent to show that the distribution of $S$ and the Lebesgue measure are mutually singular. We argue by contradiction and assume that the distribution of $S$ and the Lebesgue measure are not mutually singular. For any positive integer $k$, Lemma \ref{lemBorelnormal} shows that $H_k^c$ has Lebesgue measure $0$ where 
$$
H_k:=\left\{x\geq 0: \lim_{n\to \infty}\frac{\sum_{j=0}^{n-1} \mathds{1}_{\{D_{j,k}(x)=\ell \}}}{n} = \frac{1}{2^k},\  \text{for any}\ \ell \in \{0,1,2,\cdots, 2^k-1\} \right\}.
$$
Then, by our assumption, $\PP(S \in H_k)>0$ for all $k\geq 1$. For any $i\in \NN$, let $\tau_{i}:=\inf\{n\in \NN: S_{n}=-i\}$, and in particular, $\tau_0=0$. Note that $\tau_i$ is a.s. finite if $p<1/2$. Now define 
   $$
A_i=\sum_{n=\tau_{i}+1}^{\tau_{i+1}} \mathds{1}_{\{\xi_n=1\}}2^{S_{n-1}+i}.
   $$
Then by the strong Markov property of $(S_n)_{n\in \NN}$, we see that $(A_i)_{i\in \NN}$ are $\NN$-valued i.i.d. random variables. Moreover, 
\begin{equation}
    \label{SsumAi}
    S=\sum_{n=1}^{\infty}\mathds{1}_{\{\xi_n=1\}}2^{S_{n-1}} =\sum_{i=0}^{\infty}A_i2^{-i}
\end{equation}
converges a.s.. In addition, for any $i\in \NN$,
$$
\PP(A_0=i)\geq \PP(\xi_1=1,\xi_2=-1,\xi_3=1,\xi_4=-1,\cdots,\xi_{2i-1}=1,\xi_{2i}=-1,\xi_{2i+1}=-1)>0.
$$
Then, by Lemma \ref{lemDjkI}, a.s. on $\{S \in H_k\}$, for any $\ell \in \{0,1,2,\cdots, 2^k-1\}$,
$$ \frac{1}{2^k}=\lim_{n\to \infty}\frac{\sum_{j=0}^{n-1} \mathds{1}_{\{D_{j,k}(S)=\ell \}}}{n} = \PP(D_{0,k}(S)=\ell).$$
Notice that two constants are equal with positive probability if and only if they are equal. Thus, by (\ref{lwbdf2}), for any $k\geq 1$,
$$
(1-f(3,p))(1-p)^k \leq \PP(S\leq \frac{1}{2^k}) \stackrel{(\ref{holderine})}{=} \PP(S<\frac{1}{2^k})\leq \PP(D_{0,k}(S)=0) =\frac{1}{2^k},
$$
which is impossible for large $k$ because $p<1/2$.
\end{proof}

\section{Ruin probability as a function of the chance of winning}

In this section, we prove Corollary \ref{fincp}, Theorem \ref{derivativex0} (ii), and Proposition \ref{fapprofn}.

\begin{proof}[Proof of Corollary \ref{fincp}]
Assume that $x>2$ and $0<p_1<p_2 <1/2$. We need to prove that $f(x,p_1) <f(x,p_2)$. Let $(U^{(1)}_n)_{n\geq 1}$ and $(U^{(2)}_n)_{n\geq 1}$ be i.i.d. uniform $(0,1)$ random variables. For each $n\geq 1$, let 
$$
\xi^{(1)}_n=2\mathds{1}_{(0,p_1)}(U^{(1)}_n)-1, \quad \xi^{(1)}_n=2\left(\mathds{1}_{(0,p_1)}(U^{(1)}_n)+ \mathds{1}_{[p_1,1)}(U^{(1)}_n)\mathds{1}_{(0,\frac{p_2-p_1}{1-p_1})}(U^{(2)}_n) \right)-1.
$$
Then $(\xi^{(1)}_n)_{n\geq 1}$ (resp. $(\xi^{(2)}_n)_{n\geq 1}$) are i.i.d. Rademacher random variables  with parameter $p_1$ (resp. parameter $p_2$). Since $\xi^{(1)}_n\leq \xi^{(2)}_n$ for each $n\geq 1$, one has
 $$
S^{(1)}:=\sum_{n=1}^{\infty}\mathds{1}_{\{\xi_n^{(1)}=1\}}2^{\sum_{i=1}^{n-1}\xi_n^{(1)}} \leq \sum_{n=1}^{\infty}\mathds{1}_{\{\xi_n^{(2)}=1\}}2^{\sum_{i=1}^{n-1}\xi_n^{(2)}}=:S^{(2)}.
 $$
Notice also that $\xi^{(2)}_n=1$ if $U^{(2)}_n< (p_2-p_1)/(1-p_1)$.   
In particular, there exists a positive integer $K$ such that 
 $$
S^{(2)} \geq  \sum_{n=1}^{K}\mathds{1}_{\{\xi_n^{(2)}=1\}}2^{\sum_{i=1}^{n-1}\xi_n^{(2)}} >x-2\quad \text{on } \bigcap_{n=1}^K \left\{U^{(2)}_n < \frac{p_2-p_1}{1-p_1}\right\}.
 $$
 By Lemma \ref{ruinprobexpresslem} and that $(\xi^{(1)}_n)_{n\geq 1}$ and $(U^{(2)}_n)_{n\geq 1}$ are independent, one has 
$$
\begin{aligned}
    f(x,p_2)-f(x,p_1) &=\PP(S^{(2)}>x-2)-\PP(S^{(1)}>x-2) = \PP(S^{(2)}>x-2\geq S^{(1)})  \\
    &\geq \PP\left(\bigcap_{n=1}^K \left\{U^{(2)}_n < \frac{p_2-p_1}{1-p_1}\right\}\right) \PP(S^{(1)} \leq x-2) \\ 
    &=\left(\frac{p_2-p_1}{1-p_1}\right)^K (1-f(x,p))>0,
\end{aligned}
$$
which completes the proof.
\end{proof}

\begin{proof}[Proof of Theorem \ref{derivativex0} (ii)]
By definition, $X_0=x-2>0$. For $k\in \NN$, let 
   $$
h_k(p):=\PP(X_k>0), \quad g_k(p):=h_k(p)-h_{k+1}(p)=\PP(X_k>0, X_{k+1}\leq 0).
   $$
   Note that $h_k(p)$ and $g_k(p)$ are polynomial functions in $p$. For integers $m\geq 5$, let $I_m:=(1/m,1/2-1/m)$. Fix $m$, then $p$ and $(1-2p)$ are both bounded away from 0 if $p \in I_m$. Lemmas \ref{Tklarger34lem} and \ref{Tkles34lem} implies that there exist positive constants $C$ and $\rho_m <1$ such that for all $x>2$, $p\in I_m$ and $k\geq 1$,
\begin{equation}
    \label{upbdgkp}
  g_k(p) \leq \frac{C\rho_m^k}{(1-2p)^2}.  
\end{equation}
Let 
$$
\Theta_k:=\{(a_1,a_2,\cdots,a_k)\in \{-1,1\}^k: X_k>0, X_{k+1}\leq 0\ \text{if}\ (\xi_i)_{1\leq i \leq k}=(a_i)_{1\leq i \leq k} \}.
$$
Then one can write 
$$
g_{k}(p)=\sum_{j=0}^{k}C_{j}(k)p^j(1-p)^{k-j}
$$
where $C_{j}(k)=\# \{(a_1,a_2,\cdots,a_k)\in \Theta_k: \sum_{i=1}^ka_i=2j-k\}$. We now view $h_k(z)$ and $g_k(z)$ as complex functions:
$$
g_k(z):=\sum_{j=0}^{k}C_{j}(k)z^j(1-z)^{k-j}, \quad h_k(z):=1-\sum_{i=0}^{k-1}g_i(z), \quad z\in \mathbb{C}.
$$
Fix $p \in I_m$, for any $z\in B(p,\varepsilon/m )$ with $\varepsilon=(1-\rho_m)/2$, 
$$
\max_{j\in \{0,1,2,\cdots,k\}} \left|\frac{z}{p}\right|^j \left|\frac{1-z}{1-p}\right|^{k-j}\leq \max_{j\in \{0,1,2,\cdots,k\}} \left(1+\frac{\varepsilon}{m p}\right)^j \left(1+\frac{\varepsilon}{m(1-p)}\right)^{k-j} \leq (1+\varepsilon)^k.
$$
Therefore, by (\ref{upbdgkp}),
$$
|g_k(z)| \leq  \frac{\sum_{j=0}^{k}C_{j}(k)|z^j(1-z)^{k-j}|}{\sum_{j=0}^{k}C_{j}(k)p^j(1-p)^{k-j}} g_k(p) \leq  \frac{C\rho_m^k(1+\varepsilon)^k}{(1-2p)^2}.
$$
Note that $\rho_m(1+\varepsilon)<1$ by the choice of $\varepsilon$. Then, $(h_k(z))_{k\in \NN}$ converges to some function $h(z)$ uniformly on the rectangle $U_m:=I_m \times (-\varepsilon/m,\varepsilon/m) \mathrm{i}$. By the Weierstrass’ theorem, $h(z)$ is holomorphic (or equivalently, complex analytic) on $U_m$. In particular, for every $p\in (1/m,1/2-1/m)$, there exists $\tilde{\varepsilon}>0$ and a power series $\sum_{n=0}^{\infty}c_nz^n$ such that 
$$
h(z)=\sum_{n=0}^{\infty}c_n(z-p)^n, \quad z \in B(p,\tilde{\varepsilon}) \subset U_m.
$$
Since $h(p)=1-f(x,p)$ for $p \in (1/m,1/2-1/m)$, the coefficients $c_n$ are all real and 
$$
f(x,q)=1-c_0-\sum_{n=1}^{\infty}c_n(q-p)^n, \quad q \in (p-\tilde{\varepsilon},p+\tilde{\varepsilon}).
$$
which completes the proof since $(0,1/2)=\bigcup_{m=5}^{\infty}(1/m,1/2-1/m)$.
\end{proof}

\begin{proof}[Proof of Proposition \ref{fapprofn}]
Recall that $S_0=0$ and $S_n=\sum_{i=1}^n\xi_i$ for $n\geq 1$. For $n\in \NN$, we let 
$$
  \widetilde{f}_n(x,p):=\PP (\sum_{i=1}^n\mathds{1}_{\{\xi_i=1\}}2^{S_{i-1}} \geq x-2), \quad (x,p)\in \R \times (0,1).
$$
with the convention that $\widetilde{f}_0(x,p)=\mathds{1}_{(-\infty,2]}(x)$. Observe that 
$$
\sum_{i=1}^{n+1}\mathds{1}_{\{\xi_i=1\}}2^{S_{i-1}} = \mathds{1}_{\{\xi_1=1\}} \left( 1+  2\sum_{i=1}^{n}\mathds{1}_{\{\xi_{i+1}=1\}}2^{S_{i}-S_1} \right)+\mathds{1}_{\{\xi_1=-1\}} \left(  \frac{1}{2}\sum_{i=1}^{n}\mathds{1}_{\{\xi_{i+1}=1\}}2^{S_{i}-S_1} \right),
$$
which, by the Markov property of $(S_n)_{n\in \NN}$, implies that 
$$
\begin{aligned}
    &\quad\ \widetilde{f}_{n+1}(x,p)=p\PP(\sum_{i=1}^{n}\mathds{1}_{\{\xi_{i+1}=1\}}2^{S_{i}-S_1}\geq \frac{x+1}{2}-2)+(1-p)\PP(\sum_{i=1}^{n}\mathds{1}_{\{\xi_{i+1}=1\}}2^{S_{i}-S_1}\geq 2x-4) \\
    &=p\widetilde{f}_{n}(\frac{x+1}{2},p)+(1-p)\widetilde{f}_{n}(2x-2,p).
\end{aligned}
$$
Therefore, using (\ref{recursiveequ}), one can show that $(\widetilde{f}_n)_{n\in \NN}=(f_n)_{n\in \NN}$ by induction. Again, note that $S>\sum_{i=1}^n\mathds{1}_{\{\xi_i=1\}}2^{S_{i-1}}$ a.s. for any $n$. If $x> 2$, then by Lemma \ref{ruinprobexpresslem}, 
$$f(x,p)=\lim_{n\to \infty}\widetilde{f}_n(x,p)=\lim_{n\to \infty}f_n(x,p).$$
This yields (\ref{fequself}). Moreover, we have shown in the proof of Theorem \ref{derivativex0} (ii) that for any $x>2$, $(f_n(x,\cdot))_{n\in \NN}$ converges to $f(x,\cdot)$ locally uniformly on $(0,1/2)$ (note that $f_n(x,p)=1-h_x(p)$ in view of (\ref{surviveXn})). For $p\in (0,1/2)$, since $f(\cdot,p)$ is continuous and $f_n(\cdot,p)$ are decreasing functions, the sequence $(f_n(\cdot,p))_{n\in \NN}$ also converges to $f(\cdot,p)$ locally uniformly on $(2,\infty)$. 
\end{proof}

\section{Acknowledgments}

Yuval Peres is supported by the National Natural Science Foundation of China under Grant Number W2531011.
Shuo Qin is supported by the China Postdoctoral Science Foundation under Grant Number 2025M773086.

\bibliographystyle{plain}
\bibliography{math_ref}

\end{document}